\newcommand{\monthyear}[1]{%
\def\@monthyear{\uppercase{#1}}}
\newcommand{\volnumber}[1]{%
\def\@volnumber{\uppercase{#1}}}
    \def\ps@plain{\ps@empty
        \def\@oddfoot{\@monthyear \hfil \thepage}%
    \def\@evenfoot{\thepage \hfil \@volnumber}}
    \def\ps@firstpage{\ps@plain}
    \def\ps@headings{\ps@empty
        \def\@evenhead{%
            \setTrue{runhead}%
            \def\thanks{\protect\thanks@warning}%
        \uppercase{}\hfil}%
        \def\@oddhead{%
            \setTrue{runhead}%
            \def\thanks{\protect\thanks@warning}%
        \hfill\uppercase{$S$-Legal Index Difference Sequences}}%
        \let\@mkboth\markboth
        \def\@evenfoot{%
        \thepage \hfil \@volnumber}%
        \def\@oddfoot{%
        \@monthyear \hfil \thepage}%
    }%
\renewcommand{\ge}{\geqslant}
\renewcommand{\geq}{\geqslant}
\renewcommand{\le}{\leqslant}
\renewcommand{\leq}{\leqslant}
\theoremstyle{plain}
\numberwithin{equation}{section}
\newtheorem{thm}{Theorem}[section]
\newtheorem{theorem}[thm]{Theorem}
\newtheorem{lemma}[thm]{Lemma}
\newtheorem{corollary}[thm]{Corollary}
\newtheorem{example}[thm]{Example}
\newtheorem{definition}[thm]{Definition}
\newtheorem{proposition}[thm]{Proposition}
\newcommand{\emphdefn}[1]{\textbf{#1}}
\newtheorem{conjecture}[thm]{Conjecture}
\theoremstyle{definition}
\newtheorem{question}{Question}
\begin{document}
\monthyear{}
\volnumber{}
\setcounter{page}{1}

\title{Recurrence relations for $S$-legal index difference sequences}

\thanks{This research was conducted as part of the \textbf{2022 SMALL REU program} at Williams College, and was funded by \textbf{NSF Grant DMS1947438} and \textbf{Williams College} funds. We thank our colleagues from the 2022 SMALL REU program for many helpful conversations.}

\author{Guilherme~Zeus Dantas~e~Moura}
\address{Department of Mathematics and Statistics\\
    Haverford College\\
    Haverford, PA 19041, USA}
\email{zeusdanmou@gmail.com}

\author{Andrew Keisling}
\address{Department of Mathematics\\
    University of Michigan\\
Ann Arbor, MI 48109, USA}
\email{keislina@umich.edu}

\author{Astrid Lilly}
\address{Mathematics Department\\
    Reed College\\
    Portland, OR 97202, USA}
\email{astridlilly@reed.edu}

\author{Annika Mauro}
\address{Department of Mathematics\\
    Stanford University\\
    Stanford, CA 94305, USA}
\email{amauro@stanford.edu}

\author{Steven J. Miller}
\address{Department of Mathematics and Statistics\\
    Williams College\\
    Williamstown, MA 01267, USA}
\email{sjm1@williams.edu}

\author{Matthew Phang}
\address{Department of Mathematics and Statistics\\
    Williams College\\
    Williamstown, MA 01267, USA}
\email{mlp6@williams.edu}

\author{Santiago Velazquez Iannuzzelli}
\address{Department of Mathematics\\
    University of Pennsylvania\\
    Philadelphia, PA 19104, USA}
\email{smvelian@sas.upenn.edu}

\begin{abstract}
    Zeckendorf's Theorem implies that the Fibonacci number $F_n$ is the smallest positive integer that cannot be written as a sum of non-consecutive previous Fibonacci numbers. Catral et al.\ studied a variation of the Fibonacci sequence, the Fibonacci Quilt sequence: the plane is tiled using the Fibonacci spiral, and integers are assigned to the squares of the spiral such that each square contains the smallest positive integer that cannot be expressed as the sum of non-adjacent previous terms. This adjacency is essentially captured in the differences of the indices of each square: the $i$\textsuperscript{th} and $j$\textsuperscript{th} squares are adjacent if and only if $|i - j| \in \{1, 3, 4\}$ or $\{i, j\} = \{1, 3\}$.

    We consider a generalization of this construction: given a set of positive integers $S$, the $S$-legal index difference ($S$-LID) sequence $(a_n)_{n=1}^\infty$ is defined by letting $a_n$ to be the smallest positive integer that cannot be written as $\sum_{\ell \in L} a_\ell$ for some set $L \subset [n-1]$ with $|i - j| \notin S$ for all $i, j \in L$. We discuss our results governing the growth of $S$-LID sequences, as well as results proving that many families of sets $S$ yield $S$-LID sequences which follow simple recurrence relations.
\end{abstract}

\maketitle


\section{Introduction}

\subsection{Fibonacci numbers and Zeckendorf's Theorem}

\begin{definition}[Fibonacci sequence] \label{definition:fibonacci}
    The \emphdefn{Fibonacci sequence} $F_1, F_2, \dots$ is defined by $F_1 = 1$, $F_2 = 2$, and $F_{n} = F_{n-1} + F_{n-2}$ for all integers $n \geq 3$.
\end{definition}

Note that the indices are shifted from the standard definition of the Fibonacci sequence.
With this non-standard notation, the statement of Zeckendorf's Theorem is very beautiful.

\begin{theorem}[Zeckendorf's theorem, \cite{ZeckendorfsTheorem}] \label{theorem:zeckendorf}
    Every nonnegative integer is uniquely decomposed as a sum of distinct Fibonacci numbers without consecutive indices.
\end{theorem}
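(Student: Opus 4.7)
The plan is to prove existence and uniqueness separately, both by strong induction, with the help of a single auxiliary bound on sums of non-consecutively indexed Fibonacci numbers.

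For existence, I would induct on $n$. The base case $n = 0$ is the empty sum. For $n \geq 1$, let $F_k$ be the largest Fibonacci number with $F_k \leq n$. Since $F_{k+1} > n$, we have $n - F_k < F_{k+1} - F_k = F_{k-1}$ (and if $k = 1$ then $n - F_k = 0$, terminating the recursion). By the inductive hypothesis, $n - F_k$ admits a decomposition into distinct non-consecutively indexed Fibonacci numbers, and because $n - F_k < F_{k-1}$, every index appearing in that decomposition is at most $k - 2$. Adjoining $F_k$ therefore produces a valid decomposition of $n$.

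The uniqueness argument rests on the following lemma, which I would establish by induction using the defining recurrence: if $i_1 < i_2 < \cdots < i_m$ satisfy $i_{j+1} - i_j \geq 2$, then $\sum_{j=1}^m F_{i_j} \leq F_{i_m + 1} - 1$. The underlying telescoping identities are $F_1 + F_3 + \cdots + F_{2\ell - 1} = F_{2\ell} - 1$ and $F_2 + F_4 + \cdots + F_{2\ell} = F_{2\ell + 1} - 1$, each of which follows from $F_{n+1} - F_{n-1} = F_n$ applied iteratively. Hence any legal decomposition whose largest index is $k$ represents an integer strictly less than $F_{k+1}$.

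To conclude uniqueness, suppose $n$ has two valid decompositions with index sets $A \neq B$. Removing their common indices yields disjoint $A', B'$ with $\sum_{i \in A'} F_i = \sum_{i \in B'} F_i$ and (since $A \neq B$) at least one of them nonempty; in fact both are, since the sums agree. Let $k = \max(A' \cup B')$ and assume without loss of generality $k \in A'$. Then $B' \subseteq \{1, \dots, k-1\}$ and inherits the non-consecutive property from $B$, so the lemma gives $\sum_{i \in B'} F_i \leq F_k - 1 < F_k \leq \sum_{i \in A'} F_i$, a contradiction.

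I expect the main obstacle to be identifying and proving the bounding lemma; the greedy existence argument and the extremal uniqueness argument are then routine consequences, and the shifted indexing $F_1 = 1, F_2 = 2$ used in this paper only affects the base cases of the telescoping identities, not their form.
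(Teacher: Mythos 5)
Your proposal is correct and complete. Note, however, that the paper does not prove this statement at all: Zeckendorf's theorem is quoted from the literature (the citation is to Zeckendorf's 1972 paper) and used as a black box, so there is no in-paper argument to compare against. What you give is the standard proof: a greedy existence argument (subtract the largest $F_k\le n$ and observe $n-F_k<F_{k-1}$ forces all remaining indices to be at most $k-2$), together with the extremal bound $\sum_j F_{i_j}\le F_{i_m+1}-1$ for non-consecutive indices, which kills uniqueness by comparing the largest indices of two hypothetical decompositions. All the details check out under the paper's shifted indexing $F_1=1$, $F_2=2$ (e.g.\ $F_1+F_3=4=F_4-1$), and you correctly isolate the only place the shift matters, namely the base cases of the telescoping identities and the degenerate case $k=1$ of the greedy step.
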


A decomposition of a nonnegative integer $m$ into a sum of distinct Fibonacci numbers without consecutive indices is called a \emphdefn{Zeckendorf decomposition} of $m$.

For example, the Zeckendorf decomposition of $2022$ is   
\begin{equation}
    2022\ =\ F_{16} + F_{13} + F_{8} + F_{6} + F_{1}.
\end{equation}

Zeckendorf's~theorem yields a well-known alternate description of the Fibonacci sequence, as in Proposition~\ref{proposition:alternate-fibonacci}.

\begin{proposition} \label{proposition:alternate-fibonacci}
    For each positive integer $n$, $F_n$ is the smallest positive integer which cannot be decomposed as a sum of distinct numbers in $\{F_1, F_2, \dots, F_{n-1}\}$ without consecutive indices.
\end{proposition}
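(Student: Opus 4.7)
The plan is to derive both halves of the claim directly from Zeckendorf's Theorem (Theorem~\ref{theorem:zeckendorf}), with the proposition reducing to an essentially bookkeeping argument once uniqueness of the Zeckendorf decomposition is invoked.

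First, I would verify that $F_n$ itself cannot be written as a sum of distinct elements of $\{F_1, \dots, F_{n-1}\}$ without consecutive indices. The single-term sum $F_n = F_n$ is a valid Zeckendorf decomposition of $F_n$, so by the uniqueness part of Theorem~\ref{theorem:zeckendorf} it is the only such decomposition. In particular, no decomposition of $F_n$ uses only Fibonacci numbers with indices strictly less than $n$, which establishes this half.

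Next, I would show that every positive integer $m$ with $1 \leq m < F_n$ admits such a decomposition using only $\{F_1, \dots, F_{n-1}\}$. By Theorem~\ref{theorem:zeckendorf}, $m$ has a Zeckendorf decomposition $m = \sum_{i \in I} F_i$ with $I$ containing no consecutive integers. If some $i \in I$ satisfied $i \geq n$, then $m \geq F_i \geq F_n$ (using that the Fibonacci sequence is nondecreasing in index), contradicting $m < F_n$. Hence all indices in $I$ lie in $\{1, \dots, n-1\}$, and this decomposition exhibits $m$ in the required form.

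Combining the two steps, $F_n$ is a positive integer with no such decomposition and every smaller positive integer has one, so $F_n$ is the smallest such, as claimed. The only subtlety worth noting is the base case $n = 1$: here $\{F_1, \dots, F_{n-1}\} = \varnothing$, and the only sum available is the empty sum $0$, so indeed $F_1 = 1$ is the smallest positive integer not expressible, matching the statement. I do not anticipate a significant obstacle; the whole argument is a direct unpacking of existence and uniqueness in Zeckendorf's Theorem.
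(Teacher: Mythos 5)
Your argument is correct and is exactly the derivation the paper intends: it states that Zeckendorf's Theorem ``yields'' Proposition~\ref{proposition:alternate-fibonacci} and leaves the proof implicit, and your two steps (uniqueness rules out a decomposition of $F_n$ from $\{F_1,\dots,F_{n-1}\}$; existence plus monotonicity handles every $m<F_n$) fill in precisely that gap. Nothing further is needed.
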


Note that Proposition~\ref{proposition:alternate-fibonacci} uniquely describes the Fibonacci numbers. For example, using Proposition~\ref{proposition:alternate-fibonacci}, we assert that
\begin{itemize}
    \item $F_1 = 1$ since no positive integer can be decomposed as a sum of distinct numbers in the empty set without consecutive indices;
    \item $F_2 = 2$ since the only positive integer that can be decomposed as a sum of distinct numbers in $\{F_1\} = \{1\}$ without consecutive indices is $1$;
    \item $F_3 = 3$ since the positive integers that can be decomposed as a sum of distinct numbers in $\{F_1, F_2\} = \{1, 2\}$ without consecutive indices are $1$ and $2$; and
    \item $F_4 = 5$ since the positive integers that can be decomposed as a sum of distinct numbers in $\{F_1, F_2, F_3\} = \{1, 2, 3\}$ without consecutive indices are $1$, $2$, $3$, and $4 = 1 + 3$.
\end{itemize}

\subsection{Generalizations of the Fibonacci sequence}

Several authors have defined generalizations of the Fibonacci sequence by changing some parameters in the interpretation of the Fibonacci sequence described in Proposition~\ref{proposition:alternate-fibonacci}. 
The common theme in many of such generalizations is changing what is an allowed decomposition.


In \cite{Ostrowski}, Ostrowski showed that non-negative integers can be decomposed as integer linear combinations of terms in the continuant sequence of an irrational number $\alpha$, and that this decomposition is unique given some additional constraints. Zeckendorf's theorem can then be viewed as the special case of this result with $\alpha=\varphi$.
Daykin \cite{Daykin} investigated properties of sequences of positive integers that admit decompositions of every integer.
In \cite{LegalDecompositionsNPLRS}, Catral, Ford, Harris, Nelson, and the fifth author defined the $(s,b)$-Generacci sequence, in which the terms of the sequence are put into bins of size $b$, and legal decompositions of integers can contain at-most one term from each bin. Furthermore, the distance between bins from which numbers are taken must greater than $s$. With this notation, the Fibonacci sequence is the $(1,1)$-Generacci sequence, and the authors gave a closed-form expression for the terms of the $(1,2)$-Generacci sequence.
In a different paper \cite{f-decompositions}, Demontigny, Do, Kulkarni, Moon, Varma, and the fifth author studied yet another generalization in which the notion of a legal decomposition varies with index based on some function $f$.
For other examples of related work in this subject, see \cite{Alpert},
\cite{Grabner}, \cite{Filipponi}, \cite{Timothy}, and \cite{Hamlin}.

\subsection{Quilt sequences}

Let's visualize Proposition~\ref{proposition:alternate-fibonacci}.
We arrange a sequence of boxes in an array that extends infinitely to the right, as seen in as seen in Figure~\ref{figure:fibonacci-arrangement}. This construction is called the \emphdefn{Fibonacci array of boxes}.
We assign the Fibonacci number $F_n$ with the $n$\textsuperscript{th} box.
We rewrite Proposition~\ref{proposition:alternate-fibonacci} as Proposition~\ref{proposition:visual-fibonacci}.
\begin{proposition}\label{proposition:visual-fibonacci}
    For each positive integer $n$, $F_n$ is the smallest positive integer that cannot be written as a sum of distinct numbers in $\{F_1, F_2, \dots, F_{n-1}\}$ without using two summands for which the correspondent boxes in the Fibonacci array of boxes (Figure~\ref{figure:fibonacci-arrangement}) share an edge.
\end{proposition}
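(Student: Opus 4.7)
The plan is to observe that Proposition~\ref{proposition:visual-fibonacci} is a direct translation of Proposition~\ref{proposition:alternate-fibonacci} through the combinatorial interpretation of the Fibonacci array of boxes. Since the visual formulation is obtained from the original by replacing the index-based notion of "consecutive" with the geometric notion of "sharing an edge," the proof reduces to verifying that these two notions coincide for the arrangement pictured in Figure~\ref{figure:fibonacci-arrangement}.

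First I would pin down the geometry of the Fibonacci array: it is a single infinite row of unit squares labeled left to right by $1, 2, 3, \dots$, with the $n$th square assigned the value $F_n$. In this configuration, the only way for two distinct boxes to share an edge is for them to be horizontal neighbors, which happens exactly when their labels differ by $1$. Thus the adjacency relation "boxes $i$ and $j$ share an edge" is equivalent to "$|i-j|=1$", i.e.\ to "$i$ and $j$ are consecutive indices."

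Given this equivalence of adjacency notions, the set of admissible decompositions in Proposition~\ref{proposition:visual-fibonacci} (sums of distinct elements of $\{F_1,\dots,F_{n-1}\}$ whose summand-boxes are pairwise non-adjacent in the array) coincides exactly with the set of admissible decompositions in Proposition~\ref{proposition:alternate-fibonacci} (sums of distinct elements of $\{F_1,\dots,F_{n-1}\}$ without consecutive indices). Applying Proposition~\ref{proposition:alternate-fibonacci} then immediately yields the conclusion.

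Because the content is purely a matter of translating language, there is no real obstacle; the only thing to be careful about is writing the geometric description of the array precisely enough that the adjacency-versus-consecutive-indices equivalence is unambiguous. Once that observation is stated, the proof is complete.
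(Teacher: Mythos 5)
Your proposal is correct and matches the paper's treatment: the paper presents Proposition~\ref{proposition:visual-fibonacci} as an immediate rewriting of Proposition~\ref{proposition:alternate-fibonacci}, relying on exactly the observation you make explicit, namely that two boxes in the single-row array share an edge if and only if their indices are consecutive. You simply spell out the translation that the paper leaves implicit.
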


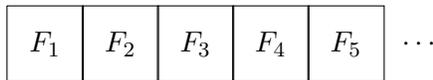
\begin{figure}[htbp]
    \centering
    \begin{tikzpicture}
    \foreach \x [count = \xx] in {$F_1$,$F_2$,$F_3$,$F_4$,$F_5$,$\cdots$}{
        \def\mdot{$\cdots$}
        \node[inner sep=8pt, minimum size=1cm] (char) at (\xx*1, 0) {\x};
        \ifx\x\mdot
        \else
            \draw (char.north west) -- (char.south west) -- (char.south east) -- (char.north east) -- (char.north west) ;
        \fi
    }
\end{tikzpicture}
    \caption{The (start of the) Fibonacci array of boxes.}
    \label{figure:fibonacci-arrangement}
\end{figure}

Inspired by Proposition~\ref{proposition:visual-fibonacci}, Catral, Ford, Harris, Nelson, and the fifth author \cite{LegalDecompositionsNPLRS} defined the Fibonacci quilt sequence by changing the 1-dimensional process of the Fibonacci array of boxes to a 2-dimensional process. The 2-dimensional process that they use is the Fibonacci spiral (Figure~\ref{figure:fibonacci-spiral}).
With respect to adjacency, the Fibonacci spiral can be viewed as the log cabin quilt pattern (Figure~\ref{figure:fibonacci-quilt-pattern}), i.e., the $i$\textsuperscript{th} and $j$\textsuperscript{th} boxes in the Fibonacci spiral are adjacent if, and only if, $i$\textsuperscript{th} and $j$\textsuperscript{th} boxes in the log cabin quilt pattern are adjacent.
This change is purely aesthetical, since the drawing of the log cabin quilt pattern is more compact to be drawn.

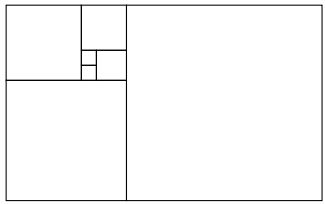
\begin{figure}[htbp]
    \centering
    \begin{tikzpicture}[scale = .2]
    \draw (0, 0)   rectangle (-1, -1);
    \draw (-1, -1) rectangle (0, -2);
    \draw (0, -2)  rectangle (2, 0);
    \draw (2, 0)   rectangle (-1, 3);
    \draw (-1, 3)  rectangle (-6, -2);
    \draw (-6, -2) rectangle (2, -10);
    \draw (2, -10) rectangle (15, 3);
\end{tikzpicture}
    \caption{The (start of the) Fibonacci spiral.}
    \label{figure:fibonacci-spiral}
\end{figure}

\begin{figure}[htbp]
    \centering
    \begin{tikzpicture}[scale=.3]
    \draw (0,1) -- (1,1) -- (1,2) -- (0,2) -- (0,1);
    \draw (0,0) -- (1,0) -- (1,1) -- (0,1) -- (0,0);
    \foreach \x in {2,...,6}
    \draw (\x,-\x+2) -- (\x,\x) -- (\x-1,\x) -- (\x-1,-\x+2) -- (\x,-\x+2);
    \foreach \x in {2,...,6}
    \draw (-\x+2,-\x+2) -- (-\x+2,\x+1) -- (-\x+1,\x+1) -- (-\x+1,-\x+2) -- (-\x+2,-\x+2);
    \foreach \x in {1,...,5}
    \draw (-\x+1,\x+2) -- (-\x+1,\x+1) -- (\x+1,\x+1) -- (\x+1,\x+2) -- (-\x+1,\x+2);
    \foreach \x in {2,...,6}
    \draw (-\x+1,-\x+2) -- (-\x+1,-\x+1) -- (\x,-\x+1) -- (\x,-\x+2) -- (-\x+1,-\x+2);
    \node at (.5,.5) {};
    \node at (.5,1.5) {};
    \foreach \x in {3,7,...,20}
    \node at (\x/4+.75,1) {};
    \foreach \x in {5,9,...,22}
    \node at (-\x/4+.75,1.5) {};
    \foreach \x in {4,8,...,21}
    \node at (1,\x/4+1.5) {};
    \foreach \x in {6,10,...,19}
    \node at (.5,-\x/4+1) {};
    \node at (.5,-26/4+2) {}; 
\end{tikzpicture}

    \caption{The (start of the) log cabin quilt pattern.}
    \label{figure:fibonacci-quilt-pattern}
\end{figure}
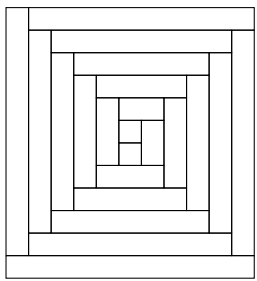

\begin{definition}[Fibonacci quilt sequence] \label{definition:fibonacci-quilt-sequence}
    The \emphdefn{Fibonacci quilt sequence} is the sequence $q_1, q_2, \dots$ of positive integers such that, for each positive integer $n$, $q_n$ is the smallest positive integer that cannot be written as a sum of distinct numbers in $\{q_1, q_2, \dots, q_{n-1}\}$ without using two summands for which the correspondent boxes in the log quilt pattern (Figure~\ref{figure:fibonacci-quilt-pattern}) share an edge.
\end{definition}

The first ten terms of the Fibonacci quilt sequence are $1, 2, 3, 4, 5, 7, 9, 12, 16, 21$.
Except on initial terms, the Fibonacci quilt sequence agrees with the Padovan sequence, OEIS sequence A000931 \cite{OEISA000931}. In particular, the Fibonacci quilt sequence satisfies simple recurrence relations, shown in Theorem~\ref{theorem:fibonacci-quilt-recurrence}.

\begin{theorem}[Recurrence relations, \cite{LegalDecompositionsNPLRS}] \label{theorem:fibonacci-quilt-recurrence}
    Let $q_n$ denote the $n$\textsuperscript{th} term in the Fibonacci quilt sequence. Then
    \begin{align}
        \text{for } n \geq 6,& \quad q_{n+1} \ =\ q_{n} + q_{n-4}, \label{equation:fibonacci-quilt-recurrence} \\ 
        \text{for } n \geq 5,& \quad q_{n+1} \ =\ q_{n-1} + q_{n-2}.
    \end{align}
\end{theorem}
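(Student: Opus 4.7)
The plan is to prove both recurrences by strong induction on $n$, focusing the main effort on the first recurrence $q_{n+1} = q_n + q_{n-4}$ (from which the second can be deduced). After verifying the recurrences for a handful of base cases using the explicit initial terms $q_1, \dots, q_{10} = 1, 2, 3, 4, 5, 7, 9, 12, 16, 21$, I assume both recurrences hold for all indices up to $n$ and prove $q_{n+1} = q_n + q_{n-4}$. By the defining property of the Fibonacci quilt sequence, this reduces to showing (i) every integer in $[1, q_n + q_{n-4} - 1]$ admits a legal decomposition from $\{q_1, \dots, q_n\}$, and (ii) the integer $q_n + q_{n-4}$ itself does not.

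Part (i) follows readily: integers $m < q_n$ are representable by the definition of $q_n$, and for $q_n \leq m < q_n + q_{n-4}$ I write $m = q_n + m'$ with $0 \leq m' < q_{n-4}$ and use the legal decomposition of $m'$ from $\{q_1, \dots, q_{n-5}\}$ guaranteed by the definition of $q_{n-4}$; appending the index $n$ preserves legality because every gap between $n$ and an index in $[n-5]$ is at least $5$, which lies outside $\{1, 3, 4\}$. For Part (ii), suppose for contradiction that some legal $L \subseteq [n]$ satisfies $\sum_{\ell \in L} q_\ell = q_n + q_{n-4}$. The easy sub-case is $n \in L$: the remaining indices must lie in $[n-5] \cup \{n-2\}$ (since gaps $1, 3, 4$ to $n$ are forbidden), and since $q_{n-2} > q_{n-4}$ the leftover sum $q_{n-4}$ would have to come from $[n-5]$, contradicting the minimality in the definition of $q_{n-4}$.

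The main obstacle is the sub-case $n \notin L$, which forces $L \subseteq [n-1]$; here I would perform a case analysis on $\ell_0 = \max L$. If $\ell_0 \leq n-5$, then $L \subseteq [n-5]$ and one bounds the maximum legal sum from $[n-5]$ below $q_n + q_{n-4}$ using the inductive hypothesis on the growth of the sequence. The four remaining cases $\ell_0 \in \{n-4, n-3, n-2, n-1\}$ are the crux: for each I strip $q_{\ell_0}$ off, use the inductive instances of the first and second recurrences to rewrite the remainder $q_n + q_{n-4} - q_{\ell_0}$ as some $q_k$ (for instance, when $\ell_0 = n-1$ the remainder simplifies to $q_{n-2}$ via $q_n = q_{n-1} + q_{n-5}$ and $q_{n-2} = q_{n-4} + q_{n-5}$), then restrict attention to the indices still compatible with $\ell_0$ (forbidden gaps shrink the allowed window sharply), and peel off the next largest index, iterating until the residue equals some $q_k$ constrained to be representable from $[k-1]$, contradicting its definition. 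Once the first recurrence is established for $n+1$, the second follows by combining inductive instances: for $n \geq 8$,
\[ q_{n+1} \ =\ q_n + q_{n-4} \ =\ (q_{n-1} + q_{n-5}) + q_{n-4} \ =\ q_{n-1} + (q_{n-5} + q_{n-4}) \ =\ q_{n-1} + q_{n-2}, \]
where the final equality is the second recurrence at index $n-3$ (itself an inductive hypothesis), and the small cases $n \in \{5, 6, 7\}$ of the second recurrence are checked by direct computation.
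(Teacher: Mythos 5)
This theorem is quoted from \cite{LegalDecompositionsNPLRS} and the present paper gives no proof of it, so there is no in-paper argument to compare against; I can only assess your proposal on its own terms. Your skeleton is the standard one (and the one the paper's Section~3 machinery abstracts): establish the lower bound $q_{n+1}\ge q_n+q_{n-4}$ by appending index $n$ to a decomposition guaranteed by the definition of $q_{n-4}$, then show $q_n+q_{n-4}$ itself has no legal decomposition. Your part~(i), your treatment of the sub-case $n\in L$, and your derivation of the second recurrence from the first are all correct.

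The genuine gap is in the sub-case $n\notin L$, which you yourself identify as the crux and then only describe conditionally (``I would perform\dots'', ``one bounds\dots'', ``iterating until\dots''). Two problems. First, the claim that each case ``iterates until the residue equals some $q_k$'' forced to be representable from $[k-1]$ is not true across the board: it works cleanly for $\ell_0=n-4$ (residue $q_n$ from a subset of $[n-1]$) and for $\ell_0=n-1$, but for $\ell_0\le n-5$ the residue is $q_n+q_{n-4}$, which is \emph{larger} than $q_n$, and minimality of $q_n$ says nothing about larger integers being unrepresentable; similarly for $\ell_0=n-3$ the residue is $q_{n-2}+q_{n-4}$, a sum of two terms that never collapses to a single $q_k$. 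These cases can only be closed by a quantitative upper bound on the maximal legal sum over a restricted index set --- something of the form $q_{m}+q_{m-2}+q_{m-7}+q_{m-9}+\cdots < q_n + q_{n-4}$, where the gap pattern $2,5,2,5,\dots$ reflects that $2$ is the smallest allowed index difference but $\{m, m-2, m-2-2\}$ is illegal (the outer gap is $4$). You never state or prove such a lemma, and it is exactly the missing ingredient (it plays the role of statement $B(n)$ and Lemma~\ref{lemma:arith-series} in Section~\ref{section:recurrence} of this paper). Second, proving that bound itself requires an induction intertwined with the recurrences $A(n)$, so the logical structure of your induction (which statements are assumed at which indices) needs to be set up more carefully than ``assume both recurrences hold up to $n$.'' Until the summation lemma is formulated and proved, the cases $\ell_0\le n-5$ and $\ell_0=n-3$ (and the residual sub-cases of $\ell_0=n-1,n-2$ where the next index is not the extremal one) remain open, so the proof is incomplete.
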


We define a similar sequence that arises from another 2-dimensional process: the Padovan triangle spiral, depicted in Figure~\ref{figure:padovan-spiral}. The Padovan triangle spiral is also called the triangular quilt.

\begin{figure}[htbp]
    \centering
    \begin{tikzpicture}
	\coordinate (A1) at (0, 0);
	\coordinate[shift={(.6, 0)}] (A2) at (A1);
	\coordinate[rotate around={60:(A1)}] (A3) at (A2);
	\coordinate[rotate around={60:(A1)}] (A4) at (A3);
	\coordinate[rotate around={60:(A1)}] (A5) at (A4);
	\coordinate[rotate around={60:(A2)}] (A6) at (A5);
	\coordinate[rotate around={60:(A2)}] (A7) at (A6);
	\coordinate[rotate around={60:(A3)}] (A8) at (A7);
	\coordinate[rotate around={60:(A4)}] (A9) at (A8);
	\coordinate[rotate around={60:(A5)}] (A10) at (A9);
	\coordinate[rotate around={60:(A6)}] (A11) at (A10);

	\draw (A1) -- (A2) -- (A3) -- cycle;
        \coordinate (T1) at (barycentric cs:A1=1,A2=1,A3=1);
	\draw (A1) -- (A3) -- (A4) -- cycle;
        \coordinate (T2) at (barycentric cs:A1=1,A3=1,A4=1);
	\draw (A1) -- (A4) -- (A5) -- cycle;
        \coordinate (T3) at (barycentric cs:A1=1,A4=1,A5=1);
	\draw (A2) -- (A5) -- (A6) -- cycle;
        \coordinate (T4) at (barycentric cs:A2=1,A5=1,A6=1);
	\draw (A2) -- (A6) -- (A7) -- cycle;
	\coordinate (T5) at (barycentric cs:A2=1,A6=1,A7=1);
	\draw (A3) -- (A7) -- (A8) -- cycle;
	\coordinate (T6) at (barycentric cs:A3=1,A7=1,A8=1);
	\draw (A4) -- (A8) -- (A9) -- cycle;
	\coordinate (T7) at (barycentric cs:A4=1,A8=1,A9=1);
	\draw (A5) -- (A9) -- (A10) -- cycle;
	\coordinate (T8) at (barycentric cs:A5=1,A9=1,A10=1);
	\draw (A6) -- (A10) -- (A11) -- cycle;
	\coordinate (T9) at (barycentric cs:A6=1,A10=1,A11=1);
\end{tikzpicture}
    \caption{The (start of the) Padovan triangle spiral, or triangular quilt.}
    \label{figure:padovan-spiral}
\end{figure}
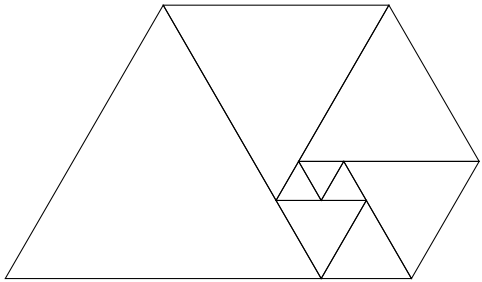

\begin{definition}[Triangular quilt sequence] \label{definition:triangular-quilt-sequence}
    The \emphdefn{triangular quilt sequence} is the sequence $t_1, t_2, \dots$ of positive integers such that, for each positive integer $n$, $t_n$ is the smallest positive integer that cannot be written as a sum of distinct numbers in $\{t_1, t_2, \dots, t_{n-1}\}$ without using two summands for which the correspondent triangles the triangular quilt (Figure~\ref{figure:padovan-spiral}) share an edge.
\end{definition}

The first few terms of the triangular quilt sequence are%
\footnote{Refer to the GitHub repository at \url{https://github.com/ZeusDM/S-LID-sequences} for the algorithm we used to generate these terms.}
\begin{equation}
    1, 2, 3, 5, 6, 11, 12, 20, 23, 40, 46, 80, 92, 152, 175, 295, 341, 573, 665, 1164, 1339, 2219, 2560.
\end{equation}

Since the definition of the triangular quilt sequence is very similar to definition of the Fibonacci quilt sequence, we expect to find a recurrence relation for the triangular quilt sequence, similar to Theorem~\ref{theorem:fibonacci-quilt-recurrence}.
However, further calculations fail to reveal such a recurrence.
Nevertheless, Proposition~\ref{proposition:triangular-quilt-recurrence} displays a natural generalization of Equation~\eqref{equation:fibonacci-quilt-recurrence} that mysteriously holds for some, but not all, values of $n$.

\begin{proposition} \label{proposition:triangular-quilt-recurrence}
    Let $6 \le n \le 50$ be an integer.
    Then,
    \begin{equation} \label{equation:recurrence-triangular}
        t_{n+1}\ =\ t_{n} + t_{n-5}
    \end{equation}
    if, and only if, $n \in \{6, 8, 10, 12, 14, 16, 18, 20, 22, 24, 26, 31, 33, 35, 37, 39, 41, 43, 46, 48\}$.
\end{proposition}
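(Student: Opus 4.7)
The statement is a finite assertion about $45$ values of $n$, so the plan is a direct computational verification via Definition~\ref{definition:triangular-quilt-sequence}: compute $t_1, t_2, \dots, t_{51}$, compare $t_{n+1}$ to $t_n + t_{n-5}$ for each $n \in \{6, 7, \dots, 50\}$, and check that equality holds exactly on the listed values.

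The first step is to replace the geometric adjacency of Figure~\ref{figure:padovan-spiral} with an explicit combinatorial rule. Reading off the triangular quilt, one determines for each pair of triangles $(T_i, T_j)$ whether they share an edge; this produces a finite set $S$ of forbidden index differences (together with a handful of exceptional pairs near the seed of the spiral, as in the Fibonacci quilt case where $\{i, j\} = \{1, 3\}$ had to be added by hand). With this translation, Definition~\ref{definition:triangular-quilt-sequence} becomes the $S$-LID rule from the abstract, and the geometry can be forgotten for the rest of the argument.

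The second step is the actual computation. I would maintain the set $A_n \subset \Z_{>0}$ of values expressible as $\sum_{\ell \in L} t_\ell$ for legal $L \subset [n]$, so that $t_{n+1} = \min(\Z_{>0} \setminus A_n)$. The sets $A_n$ can be updated by a dynamic program keyed on the "tail" of $L$, namely which of the last $\max(S)$ indices lie in $L$: since $S$ has bounded diameter the state has constant size, so each update runs in time polynomial in $t_n$. Running the DP up to $n = 51$ extends the list of initial terms already displayed in the paper, and the proposition then reduces to inspection of $45$ subtractions $t_{n+1} - t_n - t_{n-5}$.

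The main (and essentially only) obstacle is ensuring correctness of the computation: a naive enumeration of legal subsets grows exponentially in $n$, so without the DP the verification would be unwieldy, and any miscounting of adjacency near the start of the spiral would propagate into every later term. Once the adjacency is correctly read off the figure and the DP is implemented — for which the code linked in the paper's GitHub repository serves as a reference — the proposition follows immediately from the tabulated data, with no further structural insight needed.
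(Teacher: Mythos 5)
Your proposal is correct and matches the paper's approach: the paper establishes this proposition purely by computer verification (citing its GitHub repository), using exactly the adjacency rule $|i-j| \in \{1,5\}$ or $\{i,j\} = \{1,4\}$ that it records in Section~\ref{subsection:slid-sequences}. Your added detail about organizing the computation as a dynamic program over the tail of $L$ is a sensible implementation note but does not change the nature of the argument.
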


Proposition~\ref{proposition:triangular-quilt-recurrence} was verified using computer code.%
\footnote{Refer to the GitHub repository at \url{https://github.com/ZeusDM/S-LID-sequences} for the details of this verification.} We conjecture that Equation~\eqref{equation:recurrence-triangular} holds for infinitely many indices $n$.


\subsection{\texorpdfstring{$S$}{S}-legal index difference sequences} \label{subsection:slid-sequences}

Intrigued by the apparent lack of a simple recurrence relation for the triangular quilt sequence, we decided to simplify and generalize the definition of the quilt sequences.

We observe that the boxes corresponding to $q_i$ and $q_j$ share an edge in the log cabin quilt pattern (Figure~\ref{figure:fibonacci-quilt-pattern}) if, and only if, $|i - j| \in \{1, 3, 4\}$ or $\{i, j\} = \{1, 3\}$.
Similarly, the triangles corresponding to $t_i$ and $t_j$ share an edge in the Padovan triangle spiral (Figure~\ref{figure:padovan-spiral}) if, and only if, $|i-j| \in \{1, 5\}$ or $\{i, j\} = \{1, 4\}$.

In Definition~\ref{definition:slid-sequence}, we create the $S$-legal index difference sequences, $S$-LID sequence for short.
We disregard the initial edge cases of the adjacency of the log cabin quilt pattern and the Padovan triangle spiral (respectively, $\{i, j\} = \{1, 3\}$ and $\{i, j\} = \{1, 4\}$) and focus on the fact that adjacency means that the index difference is in a fixed set (respectively, the sets $\{1, 3, 4\}$ and $\{1, 5\}$).

\begin{definition}[$S$-legal index difference sequences] \label{definition:slid-sequence}
    Let $S$ be a set of positive integers.
    The \emphdefn{$S$-legal index difference sequence} (or $S$-LID sequence) is the sequence $a_1, a_2, \dots$ of positive integers such that, for each positive integer $n$, $a_n$ is the smallest positive integer that cannot be written as a sum of distinct numbers in $\{a_1, a_2, \dots, a_{n-1}\}$ without using two summands $a_i$ and $a_j$ such that $|i - j| \in S$.
\end{definition}

Note that for any set $S$ of positive integers, the first and second terms of the $S$-LID sequence are $1$ and $2$.
Also, note that the $S$-LID sequence is increasing.

\begin{definition}[$S$-legal index difference decomposition] \label{definition:slid-decomposition}
    Let $S$ be a finite set of positive integers.
    Let $a_1, a_2, \dots$ be the $S$-LID sequence.
    A \emphdefn{$S$-legal index difference decomposition} (or $S$-LID decomposition) of a nonnegative integer $m$ is a sum of the form
    \begin{equation}
        m\ =\ \sum_{\ell \in L} a_{\ell},
    \end{equation}
    in which $L \subset \mathbb{Z}_{>0}$ and $|i - j| \notin S$ for all $i, j \in L$.
\end{definition}

The definitions~\ref{definition:slid-sequence} and~\ref{definition:slid-decomposition} imply that, for every subset $S \subset \mathbb{Z}_{>0}$, each nonnegative integer has at least one $S$-LID decomposition.
Also, the $n$\textsuperscript{th} term of the $S$-LID sequence is the smallest integer that does not have a $S$-LID decomposition using the first $n-1$ terms of the $S$-LID sequence.

Refer to Section~\ref{section:examples} in order to see examples of $S$-LID sequences.
In addition to examples, we believe that Proposition~\ref{proposition:lower-bound-relation} is very useful in understanding the $S$-LID sequences.

\begin{proposition}[Fundamental lower bound of $S$-LID sequences] \label{proposition:lower-bound-relation}
    Let $S$ be a finite non-empty set of positive integers. Let $k = \max S$.
    Let $a_1, a_2, \dots$ be the $S$-LID sequence.
    Then, for all $n \geq k+1$,
    \begin{equation} \label{equation:lower-bound-relation}
        a_{n+1}\ \ge\ a_n + a_{n-k}.
    \end{equation}
\end{proposition}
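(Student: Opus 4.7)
The plan is to show that every positive integer $m < a_n + a_{n-k}$ admits an $S$-LID decomposition using only terms from $\{a_1, \ldots, a_n\}$; once this is established, the definition of $a_{n+1}$ as the smallest positive integer lacking such a decomposition forces $a_{n+1} \geq a_n + a_{n-k}$.

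I would split the range $1 \leq m < a_n + a_{n-k}$ into two cases. In the first case $m < a_n$, the number $m$ already has an $S$-LID decomposition using $\{a_1, \ldots, a_{n-1}\}$ by the very definition of $a_n$, and this decomposition is a fortiori a valid $S$-LID decomposition using $\{a_1, \ldots, a_n\}$. In the second case $a_n \leq m < a_n + a_{n-k}$, I would set $m' = m - a_n$ so that $0 \leq m' < a_{n-k}$. If $m' = 0$, then $\{a_n\}$ is itself a (trivially legal) decomposition of $m$. If $m' \geq 1$, the definition of $a_{n-k}$ guarantees an $S$-LID decomposition $m' = \sum_{\ell \in L} a_\ell$ with $L \subseteq \{1, \ldots, n-k-1\}$; I would then form $L' = L \cup \{n\}$.

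The key verification is that $L'$ is $S$-legal: the internal differences within $L$ are already legal by hypothesis, and for any $\ell \in L$ we have $\ell \leq n-k-1$, so $n - \ell \geq k+1 > k = \max S$, hence $n - \ell \notin S$. Therefore $\sum_{\ell \in L'} a_\ell = a_n + m' = m$ is a valid $S$-LID decomposition of $m$ using terms from $\{a_1, \ldots, a_n\}$.

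This argument is essentially a one-step induction dressed up with careful index bookkeeping, so I do not anticipate a real obstacle — the only subtlety is ensuring the indices line up at the boundary $n = k+1$ (where $\{a_1, \ldots, a_{n-k-1\}}$ is empty and only $m' = 0$ arises), and confirming the hypothesis $n \geq k+1$ is exactly what is needed for $a_{n-k}$ to be defined.
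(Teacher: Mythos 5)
Your proposal is correct and follows essentially the same route as the paper: split $m < a_n + a_{n-k}$ into the ranges $m < a_n$ and $a_n \le m < a_n + a_{n-k}$, and in the second case adjoin the index $n$ to an $S$-LID decomposition of $m - a_n$ supported on $\{1,\dots,n-k-1\}$, using $n - \ell \ge k+1 > \max S$ to verify legality. Your explicit handling of the $m' = 0$ boundary case is a minor tidiness improvement over the paper's version, which treats it implicitly via the empty decomposition.
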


\begin{proof}
    Since $a_{n+1}$ is the smallest number that does not have a $S$-LID decomposition using the first $n$ terms, it suffices to show that every $r < a_{n} + a_{n - k}$ has an $S$-LID decomposition using the first $n$ terms.

    If $0 \le r < a_{n}$, then, by definition of $a_{n}$, there exists an $S$-LID decomposition of $r$ using the first $n-1$ terms; therefore, the same decomposition is also an $S$-LID decomposition of $r$ using the first $n$ terms.
  
    If  $a_{n} \le r < a_{n} + a_{n-k}$, then $0 \le r - a_{n} < a_{n-k}$.
    By definition of $a_{n-k}$, there exists an $S$-LID decomposition of $r - a_n$ using the first $n-k-1$ terms, i.e., there exists $L \subset \{1, \dots, n-k-1\}$ such that $|\ell_1 - \ell_2| \notin S$ for all $\ell_1, \ell_2 \in L$ and
    \begin{equation}
        r - a_{n-1}\ =\ \sum_{\ell \in L} a_\ell.
    \end{equation}

    Since $|n - \ell| > k$ and, consequently, $|n - \ell| \notin S$ for all $\ell \in L$, it follows that $L \cup \{n\} \subset \{1, \dots, n\}$ satisfies $|\ell_1 - \ell_2| \notin S$ for all $\ell_1, \ell_2 \in L \cup \{n\}$ and
    \begin{equation}
        r\ =\ \sum_{\ell \in L \cup \{n\}} a_\ell;
    \end{equation}
    therefore, $r$ has an $S$-LID decomposition using the first $n$ terms.
\end{proof}

We remark that, although the Fibonacci quilt and triangular quilt sequences are not $S$-LID sequences, the equality case of Inequation~\eqref{equation:lower-bound-relation}, explicitly
\begin{equation} \label{equation:lower-bound-equality}
    a_{n+1}\ =\ a_{n} + a_{n - k},
\end{equation}
is related to them as well. For the Fibonacci quilt sequence, the equality (with $k = 4$) holds for all $n \geq 6$ (refer to Theorem~\ref{theorem:fibonacci-quilt-recurrence}); and for the triangular quilt sequence, the equality (with $k = 5$) holds for some values of $n$ (refer to Proposition~\ref{proposition:triangular-quilt-recurrence}).

The problem that this article partially solves is to determine, given a set $S$, if the Equation~\eqref{equation:lower-bound-equality}, applied to the $S$-LID sequence, holds for all sufficiently large positive integers $n$.
In Section~\ref{section:examples}, we understand some examples of $S$-LID sequences and we state some propositions similar to Proposition~\ref{proposition:triangular-quilt-recurrence} that show that this problem is interesting.
In Section~\ref{section:recurrence}, we display an infinite family of sets $S$ for which Equation~\eqref{equation:lower-bound-equality} holds for all sufficiently large positive integers $n$. More explicitly, we prove the following theorem:

\begin{theorem}\label{theorem:fixed_fringes_weak}
    Let $T$ be a finite set of positive integers. Let $c = \max{T}$. For all sufficiently large $k$, the $\big([k]\setminus (k-T)\big)$-LID sequence $(a_n)_{n=1}^\infty$ satisfies
    \begin{equation} \label{equation:general_recurrence}
        a_{n+1}\ =\ a_{n} + a_{n-k}.
    \end{equation}
    for all $n > k+c$.
\end{theorem}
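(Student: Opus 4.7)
The lower bound $a_{n+1} \geq a_n + a_{n-k}$ is immediate from Proposition~\ref{proposition:lower-bound-relation}, since $\max S = k$ (because $0 \notin T$ forces $k \notin k-T$). The plan is to establish the matching upper bound $a_{n+1} \leq a_n + a_{n-k}$ by induction on $n$: assuming the recurrence holds for all $k+c < n' < n$, I show that $m := a_n + a_{n-k}$ has no $S$-legal decomposition using $a_1,\ldots,a_n$.

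Suppose for contradiction $L \subseteq [n]$ is legal with $\sum_{\ell \in L} a_\ell = m$. The first case is $n \in L$: legality forces $L \setminus \{n\} \subseteq [1, n-k-1] \cup \{n-k+t : t \in T\}$, and this set must sum to $a_{n-k}$. By monotonicity $a_{n-k+t} > a_{n-k}$ for every $t \geq 1$, so no index of the second type appears; hence $L \setminus \{n\} \subseteq [1, n-k-1]$ is a legal decomposition of $a_{n-k}$ using only indices below $n-k$, contradicting the definition of $a_{n-k}$.

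The harder case is $n \notin L$, so $L \subseteq [1, n-1]$. Here I exploit that the minimum legal consecutive gap is $\min(k-T) = k-c$: for any legal $L \subseteq [1, n-1]$ sorted as $\ell_1 < \cdots < \ell_r$, one has $\ell_i \leq n-1 - (r-i)(k-c)$, giving $\sum_{\ell \in L} a_\ell \leq \sum_{j \geq 0} a_{n-1-j(k-c)}$ (with $a_i := 0$ for $i \leq 0$). The maximizing pattern $\{n-1, n-1-(k-c), n-1-2(k-c), \ldots\}$ is itself legal once $k > 2c$, since then any two non-consecutive indices differ by more than $k$. It therefore suffices to show $\sum_{j \geq 0} a_{n-1-j(k-c)} < m$; using the inductive identity $a_n = a_{n-1} + a_{n-1-k}$, this reduces to $\sum_{j \geq 1} a_{n-1-j(k-c)} < a_{n-1-k} + a_{n-k}$. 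For $c = 1$ the $j = 1$ term is exactly $a_{n-k}$, leaving the geometric-type tail bound $\sum_{j \geq 2} a_{n-1-j(k-1)} < a_{n-1-k}$; for $c \geq 2$ I would iterate the inductive recurrence to write $a_{n-1-(k-c)} = a_{n-k} + \sum_{i=0}^{c-2} a_{n-2k+i}$ and compare term-by-term with $a_{n-1-k}$.

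The main obstacle is making this tail inequality quantitative, which requires asymptotic control of $(a_n)$. For $k$ large (chosen in terms of $c$), the dominant root $\alpha$ of $x^{k+1} = x^k + 1$ satisfies $\alpha \to 1$ while $\alpha^k = 1/(\alpha - 1) \to \infty$, giving the separation needed between the target $a_{n-1-k} + a_{n-k}$ and the decaying tail. The base case $n = k+c+1$ is handled by direct computation: for $k$ large no legal pair of indices fits in $[k-c]$, so $a_i = i$ for $i \leq k-c+1$, and the remaining early terms up through $a_{k+c+2}$ are determined by enumerating the few admissible legal decompositions.
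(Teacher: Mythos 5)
Your overall architecture --- lower bound from Proposition~\ref{proposition:lower-bound-relation}, then an induction showing $a_n+a_{n-k}$ admits no legal decomposition, split into the cases $n\in L$ and $n\notin L$, with the latter bounded by the minimal-gap sum $a_{n-1}+a_{n-1-(k-c)}+a_{n-1-2(k-c)}+\cdots$ --- is exactly the paper's Lemma~\ref{lemma:B_implies_A} and its statement $B(n)$, and your treatment of the $n\in L$ case is correct and complete. The genuine gap is that you never actually prove the key inequality $a_n+a_{n-k}>\sum_{j\ge 0}a_{n-1-j(k-c)}$, and this is where essentially all the difficulty lives. After subtracting $a_{n-1}$, the leading term of the remaining right-hand side is $a_{n-1-(k-c)}=a_{n-k-1+c}$, which for $c\ge 1$ has index \emph{at least} $n-k$, so it already swallows the larger of your two target terms $a_{n-k}+a_{n-k-1}$; the inequality hinges on the second-order quantity $a_{n-k}+a_{n-k-1}-a_{n-k-1+c}$ being large enough to dominate the tail. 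This is not a ``geometric-type tail bound'': the paper has to introduce the auxiliary statement $C_d(n):a_{n+1}+a_n>a_{n+c}+a_{n-d}$, prove that it propagates under the recurrence (Lemma~\ref{lemma:C_induction}), and run a simultaneous induction on $A$, $B$, and $C_d$ (Theorem~\ref{theorem:finite_check}), together with the separate gap-sum estimate of Lemma~\ref{lemma:arith-series}. Your proposal replaces all of this with an appeal to the dominant root of $x^{k+1}=x^k+1$, which is circular inside an induction on $n$ (the closed form $a_n=\sum\alpha_i\lambda_i^n$ is only available once the recurrence is known to hold, and the coefficients and error terms depend on the initial window you have not yet controlled) and in any case only addresses $n$ in the asymptotic regime, not the range $k+c<n\lesssim 2k+c+d$ where the inequality must be verified by hand.

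Relatedly, your base case is far too narrow. Because the reduction for $c\ge 2$ rewrites $a_{n-k+c-1}$ via the recurrence at indices near $n-k$, and because the tail bound feeds on inequalities at indices shifted down by multiples of $k-c$, the induction needs $A$, $B$, and $C_d$ verified on windows of length roughly $k$ (the paper requires $B(n)$ for $k+c+1\le n\le 2k+c+d+2$ and $C_d(n)$ for $c+d+1\le n\le k+c+d+1$), not just the single value $n=k+c+1$ and ``terms up through $a_{k+c+2}$.'' Establishing those windows is where the hypothesis ``$k$ sufficiently large'' actually enters: the paper shows (Lemma~\ref{lemma:initial_values}) that $a_{k+i}=k+f_T(i)$ with $f_T$ independent of $k$, and then the base-case inequalities reduce to explicit conditions of the form $k>f_T(i-1+c)-f_T(i-1)-f_T(i)+i+2c-1$. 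Without some substitute for this analysis, your argument does not produce a value of $k$ beyond which the theorem holds, nor does it close the induction.
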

Refer to Theorem~\ref{theorem:fixed_fringes} for the explicit meaning of ``sufficiently large $k$.''

\section{Examples of \texorpdfstring{$S$}{S}-legal index difference sequences} \label{section:examples}


The $S$-LID sequences are natural generalizations of the $2$-dimensional processes that yield the Fibonacci quilt and triangular quilt sequences.
Furthermore, this family of sequences also provides generalizations to important and standard sequences, such as the Fibonacci sequence and the sequence of powers of $2$.
In this section, we explore some examples of $S$-LID sequences and understand their structures.

\subsection{``Sparse'' sets \texorpdfstring{$S$}{S}} 
In this subsection, we discuss the $\varnothing$-LID, $\{1\}$-LID, $\{2\}$-LID, and $\{3\}$-LID sequences.

\begin{example}[The $\varnothing$-LID sequence]
    The $\varnothing$-LID sequence is the sequence of powers of $2$.

    Let $a_1, a_2, \dots$ be the $\varnothing$-LID sequence, i.e., the sequence of positive integers such that, for each positive integer $n$, $a_n$ is the smallest positive integer that cannot be written as a sum of distinct numbers in $\{a_1, a_2, \dots, a_{n-1}\}$.
    
    Note that the sequence $2^0, 2^1, \dots$ of powers of $2$ has the property that every number can be uniquely written as a sum of distinct terms. It follows that the sequence of powers of $2$ satisfies the definition of the $\varnothing$-LID sequence, thus $a_n = 2^{n-1}$ for all positive integers $n$.
\end{example}

\begin{example}[The $\{1\}$-LID sequence]
    The $\{1\}$-LID is the Fibonacci sequence.

    Let $a_1, a_2, \dots$ be the $\{1\}$-LID sequence, i.e., the sequence of positive integers such that, for each positive integer $n$, $a_n$ is the smallest positive integer that cannot be written as a sum of distinct numbers in $\{a_1, a_2, \dots, a_{n-1}\}$ without using two summands $a_i$ and $a_j$ such that $|i - j| = 1$.
    
    This definition is equivalent to the alternative definition of Fibonacci numbers presented in Definition~\ref{proposition:alternate-fibonacci}. Hence, $a_n = F_{n}$ for all positive integers $n$.
\end{example}

We remark that the $\{1\}$-LID sequence satisfies the equality case of Inequation~\eqref{equation:lower-bound-relation} for $k = \max \{1\} = 1$, explicitly
\begin{equation}
    a_{n+1}\ =\ a_{n} + a_{n - 1},
\end{equation}
for all integers $n \geq 2$.

\begin{proposition}[The $\{2\}$-LID sequence]
    Let $a_1, a_2, \dots$ be the $\{2\}$-LID sequence.
    Then, $a_1 = 1, a_2 = 2, a_3 = 4$, and
    \begin{gather}
        a_{n+1}\ =\ a_{n} + a_{n-2}, \text{ and} \label{equation:2-lid} \\
        a_{n+1} > \sum_{i = 1}^{n-2} a_i \label{equation:2-lid:aux}
    \end{gather}
    for all integers $n \geq 3$.
\end{proposition}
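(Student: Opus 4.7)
The plan is to prove both (i) $a_{n+1} = a_n + a_{n-2}$ and (ii) $a_{n+1} > \sum_{i=1}^{n-2} a_i$ simultaneously by strong induction on $n \ge 3$. The values $a_1 = 1$, $a_2 = 2$, $a_3 = 4$ follow directly from Definition~\ref{definition:slid-sequence}, and the base case $n = 3$ is handled by enumerating the $\{2\}$-LID decompositions available from $\{a_1, a_2, a_3\} = \{1, 2, 4\}$ (indices $1$ and $3$ may not both appear), which gives $a_4 = 5 = a_3 + a_1$ and $5 > 1 = a_1$. The inductive step is organized so that once (i) is proved for index $n$, statement (ii) follows immediately: adding $a_{n-2}$ to the inequality $a_n > \sum_{i=1}^{n-3} a_i$ (which is (ii) at $m = n-1$) yields $a_{n+1} = a_n + a_{n-2} > \sum_{i=1}^{n-2} a_i$. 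So the heart of the argument is the recurrence (i).

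By Proposition~\ref{proposition:lower-bound-relation} with $k = 2$ we already have $a_{n+1} \ge a_n + a_{n-2}$, so it suffices to show that $a_n + a_{n-2}$ admits no $\{2\}$-LID decomposition $\sum_{\ell \in L} a_\ell$ with $L \subset [n]$. I would argue by contradiction, splitting on which of $n, n-1, n-2$ lie in $L$. First, if $n \in L$, then $n - 2 \notin L$ and the residual sum is $a_{n-2}$; having also $n-1 \in L$ is impossible since then the residual would include $a_{n-1} > a_{n-2}$, while $n - 1 \notin L$ forces $L \setminus \{n\} \subset [n-3]$, giving a legal decomposition of $a_{n-2}$ from the first $n-3$ terms and contradicting the definition of $a_{n-2}$. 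Second, if $n, n-1 \notin L$, then $L \subset [n-2]$ and (ii) at $m = n-1$ yields
\begin{equation*}
    \sum_{\ell \in L} a_\ell\ \le\ \sum_{i=1}^{n-2} a_i\ =\ a_{n-2} + \sum_{i=1}^{n-3} a_i\ <\ a_{n-2} + a_n,
\end{equation*}
a contradiction.

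The remaining case $n \notin L$, $n - 1 \in L$ is where the real work lies. Here $n - 3 \notin L$, and using the inductive identity $a_n = a_{n-1} + a_{n-3}$ (i.e.\ (i) at $m = n-1$) to rewrite $a_n + a_{n-2} - a_{n-1} = a_{n-3} + a_{n-2}$, I would further split on whether $n - 2 \in L$. If so, then $n - 4 \notin L$ as well, and the further residual $a_{n-3}$ is decomposed legally from $[n-5]$, contradicting the definition of $a_{n-3}$. If $n - 2 \notin L$, then $L \setminus \{n-1\} \subset [n-4]$, and combining (ii) at $m = n-2$ with (i) at $m = n-2$ gives
\begin{equation*}
    a_{n-3} + a_{n-2}\ \le\ \sum_{i=1}^{n-4} a_i\ <\ a_{n-1}\ =\ a_{n-2} + a_{n-4},
\end{equation*}
forcing $a_{n-3} < a_{n-4}$, which contradicts that $(a_n)$ is increasing. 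The only delicate part of the argument is the bookkeeping of which indices must be forbidden in each subcase; once the case structure is laid out carefully, all contradictions follow from routine applications of both clauses of the inductive hypothesis at smaller indices.
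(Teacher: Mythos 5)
Your argument is correct in substance and follows essentially the same route as the paper's proof: the same simultaneous induction on the recurrence and the auxiliary inequality, the same appeal to Proposition~\ref{proposition:lower-bound-relation} for the lower bound, and the same case analysis on which of $n$, $n-1$, $n-2$ lie in $L$, with matching contradictions in each subcase. The one gap is in the base case: your inductive step at $n$ invokes both clauses of the hypothesis at $m = n-2$ (namely $a_{n-1} = a_{n-2} + a_{n-4}$ and $a_{n-1} > \sum_{i=1}^{n-4} a_i$), which requires $n-2 \ge 3$; so the induction can only begin at $n \ge 5$, and the case $n = 4$ (i.e., $a_5 = 7 = a_4 + a_2$) must be verified directly alongside $n = 3$, exactly as the paper does. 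With that extra base case added, your proof coincides with the paper's.
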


\begin{proof}
    We use induction on $n$.
    Note that $a_1 = 1$, $a_2 = 2$, $a_3 = 4$, $a_4 = 5$, $a_5 = 7$. Hence, the statement is true for $n \in \{3, 4\}$.
    
    Let $n \geq 5$.
    Assume that $a_{t+1} = a_t + a_{t-2}$ and $a_{t+1} > \sum_{i = 1}^{t-2} a_i$ for all $3 \le t < n$.
    From Proposition~\ref{proposition:lower-bound-relation}, it follows that 
    \begin{equation}  \label{equation:lower-bound-relation-2-LID}
    a_{n+1} \geq a_n + a_{n-2}.
    \end{equation}
    
    Suppose $a_n + a_{n-2}$ has a $\{2\}$-LID decomposition using the first $n$ terms of the $\{2\}$-LID sequence, i.e., there exists $L \subset [n]$ such that 
    \begin{equation}
        a_n + a_{n-2}\ =\ \sum_{\ell \in L} a_\ell
    \end{equation}
    and $|i - j| \neq 2$ for all $i, j \in L$.
    
    Suppose $n \in L$. Then,
    \begin{equation}
        a_{n-2}\ =\ \sum_{\ell \in L\setminus\{n\}} a_\ell.
    \end{equation}
    Note that $n-1 \notin L$, otherwise $a_{n-2} = \sum_{\ell \in L\setminus\{n\}} a_\ell \geq a_{n-1} > a_{n-2}$.
    Also note that $n-2 \notin L$, since $|n - (n-2)| = 2$. Hence, $L \subset [n-3]$.
    Therefore, $a_{n-2}$ has a $\{2\}$-LID decomposition using the first $n-3$ terms of the $S$-LID sequence, a contradiction.
    
    Suppose $n \notin L$ and $n-1 \in L$. Then,
    \begin{equation}
        a_{n-2} + a_{n-3}\ =\ a_n - a_{n-1} + a_{n-2} 
                         \ =\ \sum_{\ell \in L\setminus\{n-1\}} a_\ell.
    \end{equation}
    Since $\left|(n-1) - (n-3)\right|\ =\ 2$, it follows that $n-3 \notin L$.
    If $n-2 \in L$, then $a_{n-3}\ =\ \sum_{\ell \in L\setminus\{n-1, n-2\}} a_\ell$, hence $a_{n-3}$ has a $\{2\}$-LID decomposition using the first $n-4$ terms of the $S$-LID sequence, a contradiction.
    If $n-2 \notin L$, then 
    \begin{equation}
        \sum_{i = 1}^{n-4} a_i\ <\ a_{n-1}\ =\ a_{n-2} + a_{n-4}\ \le\ a_{n-2} + a_{n-3}\ =\ \sum_{\ell \in L\setminus\{n-1\}} a_\ell\ \le\ \sum_{i = 1}^{n-4} a_i,
    \end{equation}
    a contradiction.

    Suppose $n, n-1 \notin L$. Then,
    \begin{equation}
        \sum_{i = 1}^{n-3} a_i + a_{n-2}\ <\ a_{n} + a_{n-2}\ =\ \sum_{\ell \in L} a_\ell \ \le\ \sum_{i=1}^{n-2} a_i,
    \end{equation}
    a contradiction.
    
    Therefore, $a_n + a_{n-2}$ does not have a $\{2\}$-LID decomposition using the first $n$ terms of the $\{2\}$-LID sequence. Then, $a_{n+1} \le a_n + a_{n-2}$, and consequently, with Equation~\eqref{equation:lower-bound-relation-2-LID}, $a_{n+1} = a_n + a_{n-2}$.
    Moreover, $a_{n+1} = a_n + a_{n-2} > a_{n-2} + \sum_{i = 1}^{n-3} a_i = \sum_{i = 1}^{n-2} a_i$.
    
    Therefore, by induction, Equations~\eqref{equation:2-lid} and \eqref{equation:2-lid:aux} hold for all integers $n \geq 3$.
\end{proof}

We remark that the $\{2\}$-LID sequence is the OEIS sequence A164316 \cite{OEISA164316}.


For $k \in \{1, 2\}$, the $\{k\}$-LID sequence satisfies $a_{n+1} = a_{n} + a_{n-k}$ for all $n \geq k+1$.
However, this fact is not true for $k = 3$.

\begin{example}[The $\{3\}$-LID sequence]
    The first twenty terms of the $\{3\}$-LID sequence are 
    \begin{equation}
        1, 2, 4, 8, 9, 11, 15, 23, 32, 64, 79, 134, 166, 244, 355, 489, 679, 1011, 1485, 2163.
    \end{equation}
\end{example}

\begin{proposition} \label{proposition:computation:3-lid}
    Let $4 \leq n \leq 40$ be an integer.
    Let $a_1, a_2, \dots$ be the $\{3\}$-LID sequence.
    Then, 
    \begin{equation}
        a_{n+1}\ =\ a_{n} + a_{n-3}
    \end{equation}
    holds if, and only if, $n \in \{4, 5, 6, 7, 8, 10, 12, 15\}$.
\end{proposition}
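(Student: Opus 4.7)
The plan is to verify the statement by direct computation, in the same spirit as the verification of Proposition~\ref{proposition:triangular-quilt-recurrence}. Since the list of indices $\{4,5,6,7,8,10,12,15\}$ exhibits no obvious closed-form pattern and Proposition~\ref{proposition:lower-bound-relation} only supplies the \emph{inequality} $a_{n+1} \ge a_n + a_{n-3}$ (with nothing ruling out strict inequality at any given step), a purely theoretical argument seems out of reach here; a finite check is the natural route.

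First, I would generate $a_1, a_2, \dots, a_{41}$ from the defining procedure. A naive implementation that enumerates all subsets $L \subset [n-1]$ at each step is infeasible, so I would use dynamic programming: maintain a Boolean array $R[m]$, indexed by nonnegative integers $m$ up to the current bound, recording whether $m$ admits a $\{3\}$-LID decomposition using the terms produced so far. Whenever a new term $a_n$ is appended, the representable set is updated by adjoining all sums $a_n + v$, where $v$ ranges over the values representable by legal subsets of $\{a_1, \dots, a_{n-1}\}$ that do not contain $a_{n-3}$ (since any $\ell \in L$ with $\ell \neq n$ and $|n-\ell|\neq 3$ must satisfy $\ell \neq n-3$). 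This auxiliary set is tracked by a secondary Boolean array maintained in parallel. The next term $a_{n+1}$ is then read off as the smallest positive integer $m$ at which $R$ is false.

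Second, with $a_1, \dots, a_{41}$ tabulated, I would check directly, for each $n$ with $4 \le n \le 40$, whether $a_{n+1} = a_n + a_{n-3}$, and compare the resulting set of indices to $\{4, 5, 6, 7, 8, 10, 12, 15\}$. Since the relation is an equality check between tabulated integers, there is no numerical subtlety to worry about.

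The main obstacle is purely computational: the $\{3\}$-LID sequence grows roughly geometrically (the partial data already shows $a_{20} = 2163$), so $a_{41}$ is large and the Boolean array $R$ may demand nontrivial memory as $n$ approaches $40$. For $n \le 40$ this remains comfortably tractable on a standard computer, and the implementation can be documented in the GitHub repository already referenced in the footnotes to Proposition~\ref{proposition:triangular-quilt-recurrence}.
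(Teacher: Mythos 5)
Your approach matches the paper's: Proposition~\ref{proposition:computation:3-lid} is justified there only by a computer verification (with the code deposited in the referenced repository), which is exactly the finite check you describe. One implementation caveat: since the forbidden companion index $n-3$ shifts as $n$ grows, a single auxiliary Boolean array cannot be ``maintained in parallel'' from step to step --- the dynamic program must instead record, for each representable value, which of the last three indices appear in some legal representing set --- but this refinement does not change the soundness or tractability of the method.
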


Proposition~\ref{proposition:computation:3-lid} was verified using computer code.%
\footnote{Refer to the GitHub repository at \url{https://github.com/ZeusDM/S-LID-sequences} for the details of this verification.}

\subsection{``Dense'' sets \texorpdfstring{$S$}{S}}
In this subsection, we discuss the $S$-LID sequences for $S = [k] \setminus (k - T)$, for some relatively small sets $T$: $T = \varnothing$, $T = \{1\}$, and $T=\{2\}$.
In Section~\ref{section:recurrence}, we deal with arbitrary sets $T$ and sufficiently large $k$. 

\begin{lemma} \label{lemma:k-slid-decomposition}
    Let $k, n$ be integers.
    Let $a_1, a_2, \dots$ be the $[k]$-LID sequence.
    The largest integer with a $[k]$-LID decomposition using $\{a_1, \dots, a_{n}\}$ is
    \begin{equation}
        \sum_{i=0}^{\left\lfloor \frac{n+1}{k+1} \right\rfloor} a_{n-i(k+1)}.
    \end{equation}
    Moreover,
    \begin{equation}
        \begin{aligned}
            a_{n+1}\ \le\ 1 + \sum_{i=0}^{\left\lfloor \frac{n+1}{k+1} \right\rfloor} a_{n-i(k+1)}.
        \end{aligned}
    \end{equation}
\end{lemma}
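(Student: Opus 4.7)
The plan has two parts: (i) determine the largest integer with a $[k]$-LID decomposition using $\{a_1, \ldots, a_n\}$ by a swap argument, and (ii) deduce the upper bound on $a_{n+1}$ from the defining property of the $S$-LID sequence. Recall that a valid index set $L \subseteq [n]$ for the $[k]$-LID sequence is one with $|i-j| \notin [k]$, i.e., $|i-j| \ge k+1$, for all distinct $i, j \in L$.

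For part (i), I would take an arbitrary valid $L = \{\ell_1 < \ell_2 < \cdots < \ell_m\}$ and use the gap condition $\ell_{j+1} - \ell_j \ge k+1$, applied inductively downward from $j=m$, to obtain $\ell_j \le n - (m-j)(k+1)$. Since the $S$-LID sequence is strictly increasing (as observed just after Definition~\ref{definition:slid-sequence}), this yields
\begin{equation}
    \sum_{j=1}^{m} a_{\ell_j}\ \le\ \sum_{i=0}^{m-1} a_{n-i(k+1)}.
\end{equation}
The greedy set $L^{*} = \{n,\, n-(k+1),\, n-2(k+1),\, \ldots\} \cap [n]$ is itself valid and realizes the right-hand side with $m$ taken maximal subject to $n - (m-1)(k+1) \ge 1$, so $L^{*}$ is optimal. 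Under the natural convention $a_i = 0$ for $i \le 0$, extending the upper index up to $\lfloor (n+1)/(k+1) \rfloor$ as displayed only appends zero terms and does not change the sum, which matches the formula in the statement.

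For part (ii), denote the maximum from part (i) by $M(n)$. Every positive integer $r > M(n)$ lacks a $[k]$-LID decomposition using $\{a_1, \ldots, a_n\}$, because every such decomposition has sum at most $M(n)$; in particular $M(n) + 1$ is not decomposable. Since $a_{n+1}$ is by definition the smallest positive integer with no $[k]$-LID decomposition from the first $n$ terms, this forces $a_{n+1} \le M(n) + 1$, which is the second claim. The main obstacle is purely bookkeeping: verifying that the inductive inequality $\ell_j \le n - (m-j)(k+1)$ is tight at $L^{*}$ (so that $L^{*}$ really achieves the bound), and reconciling the displayed upper limit $\lfloor (n+1)/(k+1) \rfloor$ with the true count $1 + \lfloor (n-1)/(k+1) \rfloor$ of positive indices actually contributing to the greedy sum. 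Everything past that is mechanical.
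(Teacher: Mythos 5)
Your proposal is correct and follows essentially the same route as the paper: bound the indices of any legal set $L$ via the gap condition $\ell_t - \ell_{t+1} \ge k+1$, use monotonicity of the sequence to compare against the greedy arithmetic-progression sum, and then read off the bound on $a_{n+1}$ from its defining minimality. You are in fact slightly more careful than the paper, which never explicitly checks that the greedy set $L^{*}$ is itself legal and attains the maximum, nor states the convention $a_i = 0$ for $i \le 0$ needed to reconcile the upper summation limit $\left\lfloor \frac{n+1}{k+1} \right\rfloor$ with the number of positive indices.
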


\begin{proof}
    Suppose $m$ has a $[k]$-LID decomposition using $\{a_1, \dots, a_{n}\}$, i.e, there exists $L \subset [n]$ such that 
    \begin{equation}
        m\ =\ \sum_{\ell \in L} a_{\ell} 
    \end{equation}
    and $|i - j| \geq k+1$ for all distinct $i, j \in L$.
    Let $\ell_1 > \ell_2 > \ell_3 > \cdots < \ell_{|L|}$ be the elements of $L$.
    We know that $\ell_1 \le n$ and $\ell_t - \ell_{t+1} \geq k+1$ for all positive integers $t < |L|$.
    Hence, since the $S$-LID sequence is increasing, it follows that
    \begin{equation}
        m\ =\ \sum_{t=1}^{|L|} a_{\ell_t}\ \le\ \sum_{i=0}^{\left\lfloor \frac{n+1}{k+1} \right\rfloor} a_{n-i(k+1)}.
    \end{equation}
    Therefore, the first claim follows.
    Since $a_{n+1}$ is the smallest positive integer with no $[k]$-LID decomposition using $\{a_1, \dots, a_{n}\}$, the second claim follows.
\end{proof}

\begin{proposition} \label{proposition:k-slid}
    Let $k$ be an integer.
    Let $a_n$ denote the $n$\textsuperscript{th} term of the $S$-LID sequence.
    Then, for all nonnegative integers $n$,
    \begin{equation} \label{equation:1:k-slid}
        a_{n+1}\ =\ 1 + \sum_{i=0}^{\left\lfloor \frac{n+1}{k+1} \right\rfloor} a_{n-i(k+1)}
    \end{equation}
    and
    \begin{equation} \label{equation:2:k-slid}
        \begin{aligned}
            a_{n+1} \ &=\ n+1 & \text{ if } n \le k, \\
            a_{n+1} \ &=\ a_n + a_{n-k} & \text{ if } n \geq k+1.
        \end{aligned}
    \end{equation}
\end{proposition}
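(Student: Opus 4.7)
The plan is to prove both Equations~\eqref{equation:1:k-slid} and~\eqref{equation:2:k-slid} simultaneously by strong induction on $n$, treating Equation~\eqref{equation:1:k-slid} as the primary claim and deducing Equation~\eqref{equation:2:k-slid} as an algebraic consequence.

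For the base case, I would note that whenever $n \le k+1$, any two distinct indices in $\{1,\dots,n-1\}$ differ by at most $n-2 \le k-1$, which lies in $[k]$. Hence no $[k]$-LID decomposition can use more than one summand from $\{a_1,\dots,a_{n-1}\}$, so only singleton decompositions are permitted. Starting from $a_1=1$ and iterating, this forces $a_n = n$ for all $n \le k+1$, in agreement with both formulas.

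For the inductive step, assume both equations hold up to index $n$, and write $M_n = \sum_{i=0}^{\lfloor (n+1)/(k+1)\rfloor} a_{n-i(k+1)}$. Lemma~\ref{lemma:k-slid-decomposition} already gives $a_{n+1} \le 1 + M_n$, so it remains to exhibit a $[k]$-LID decomposition using $\{a_1,\dots,a_n\}$ for every integer $r$ with $1 \le r \le M_n$. I would split into two cases. If $r < a_n$, the inductive hypothesis applied to $a_n$ immediately supplies such a decomposition using only $\{a_1,\dots,a_{n-1}\}$. If $a_n \le r \le M_n$, then using $\lfloor x-1 \rfloor = \lfloor x\rfloor - 1$ gives the telescoping identity
\begin{equation}
    M_n - a_n \ =\ \sum_{j=0}^{\lfloor (n-k)/(k+1)\rfloor} a_{(n-k-1) - j(k+1)} \ =\ M_{n-k-1},
\end{equation}
so $r - a_n \in [0, M_{n-k-1}]$. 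By the inductive hypothesis applied to $a_{n-k}$, the difference $r - a_n$ admits a $[k]$-LID decomposition using $\{a_1,\dots,a_{n-k-1}\}$, and since index $n$ differs from every index in $\{1,\dots,n-k-1\}$ by at least $k+1 \notin [k]$, appending the summand $a_n$ yields a valid $[k]$-LID decomposition of $r$.

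This establishes Equation~\eqref{equation:1:k-slid} at $n+1$. Equation~\eqref{equation:2:k-slid} then drops out for $n \ge k+1$ from the chain
\begin{equation}
    a_{n+1} \ =\ 1 + M_n \ =\ a_n + (1 + M_{n-k-1}) \ =\ a_n + a_{n-k},
\end{equation}
where the last equality applies Equation~\eqref{equation:1:k-slid} at index $n-k-1$ (valid by the inductive hypothesis). The part of Equation~\eqref{equation:2:k-slid} for $n \le k$ is covered by the base case. I expect the only delicate point to be the boundary behavior of the floor-function identity and the implicit convention $a_m = 0$ for $m \le 0$ needed to interpret the sum $M_n$ near $n \approx k$; once that bookkeeping is pinned down, the induction runs cleanly.
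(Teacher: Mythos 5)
Your proof is correct and follows essentially the same route as the paper: induction with the base case $a_n = n$ for $n \le k+1$, the upper bound $a_{n+1} \le 1 + M_n$ from Lemma~\ref{lemma:k-slid-decomposition}, and a matching lower bound, together with the reindexing identity $M_n = a_n + M_{n-k-1}$. The only difference is that where you re-derive the lower bound by exhibiting a decomposition of every $r \le M_n$ via the case split $r < a_n$ versus $a_n \le r \le M_n$, the paper simply invokes Proposition~\ref{proposition:lower-bound-relation} to get $a_{n+1} \ge a_n + a_{n-k}$ and uses the inductive hypothesis to identify $1 + M_n$ with $a_n + a_{n-k}$; your argument is in effect an inlined re-proof of that proposition.
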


\begin{proof}
    We use induction on $n$.
    Recall that $a_1 = 1$.
    There are no $[k]$-LID decompositions with more than $2$ summands using the first $n$ terms of the $[k]$-LID sequence, for all nonnegative integers $n \le k$.
    Therefore, $a_{n+1} = a_{n} + 1$  for all nonnegative integers $n \le k$.
    By induction, it follows that $a_{n+1} = {n+1}$ for all nonnegative integers $n \le k$.

    Now, let $n \geq k + 1$.
    Assume, by induction hypothesis, that equation~\eqref{equation:1:k-slid} holds for $n-k-1$, i.e., 
    \begin{equation}
        a_{n-k}\ =\ 1 + \sum_{i=0}^{\left\lfloor \frac{n-k}{k+1} \right\rfloor} a_{n-k-1=i(k+1)}
                 =\ 1 + \sum_{i=1}^{\left\lfloor \frac{n+1}{k+1} \right\rfloor} a_{n-(i+1)(k+1)}.
    \end{equation}
    
    Lemma~\ref{lemma:k-slid-decomposition} implies that 
    \begin{equation} \label{equation:proposition:k-slid:upper-bound}
            a_{n+1}\ \le\ 1 + \sum_{i=0}^{\left\lfloor \frac{n+1}{k+1} \right\rfloor} a_{n-i(k+1)}
            =\ a_{n} + a_{n-k},
    \end{equation}
    and Proposition~\ref{proposition:lower-bound-relation} implies that
    \begin{equation}  \label{equation:lower-bound-relation-k-LID}
        a_{n+1}\ \geq\ a_n + a_{n-k}.
    \end{equation}

    Therefore, equality holds in Equations~\eqref{equation:proposition:k-slid:upper-bound}~and~\eqref{equation:lower-bound-relation-k-LID}, as desired for the induction.
\end{proof}

For sets $S$ of the form $[k] \setminus \{k - 1\}$ and $[k] \setminus \{k - 2\}$, we have Conjectures~\ref{conjecture:computation1} and \ref{conjecture:computation2}, based on computational experiments.%
\footnote{Refer to the GitHub repository at \url{https://github.com/ZeusDM/S-LID-sequences} for the details of these verifications.}

\begin{conjecture} \label{conjecture:computation1}
    Let $k \geq 2$. Let $S = [k] \setminus \{k-1\}$.
    Let $a_1, a_2, \dots$ be the $S$-LID sequence. Then,
    \begin{equation} \label{equation:computation1}
        a_{n+1}\ =\ a_{n} + a_{n-k}
    \end{equation}
    for all integers $n > k + 1$.
\end{conjecture}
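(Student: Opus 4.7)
The plan is to proceed by strong induction on $n$. The lower bound $a_{n+1}\geq a_n + a_{n-k}$ is immediate from Proposition~\ref{proposition:lower-bound-relation}, so the real task is to rule out that $a_n + a_{n-k}$ itself admits an $S$-LID decomposition using $\{a_1,\ldots,a_n\}$. The case $k=2$ is already covered in the paper by the $\{2\}$-LID result, so I would assume $k\geq 3$ from the outset. Suppose for contradiction that $a_n + a_{n-k} = \sum_{\ell\in L} a_\ell$ for a legal set $L\subseteq[n]$, where legality here means every two distinct indices of $L$ differ by exactly $k-1$ or by at least $k+1$. The argument splits on $\max L$.

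If $n\in L$, peeling it off leaves $a_{n-k} = \sum_{\ell\in L\setminus\{n\}}a_\ell$ with each remaining index either equal to $n-k+1$ (the only index at distance $k-1$ from $n$) or at most $n-k-1$. Since $a_{n-k+1}>a_{n-k}$ rules out the first option, the decomposition must sit inside $[n-k-1]$, contradicting the definition of $a_{n-k}$.

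If $\max L = n-1$, the inductive hypothesis $a_n = a_{n-1}+a_{n-k-1}$ rewrites the residual as $a_{n-k-1}+a_{n-k} = \sum_{\ell\in L\setminus\{n-1\}}a_\ell$, with remaining indices restricted to $\{n-k\}\cup[n-k-2]$ by legality with $n-1$. If $n-k\in L$, one further subtraction makes $a_{n-k-1}$ a legal sum on the index set $\{n-2k+1\}\cup[n-2k-1]$, which is contained in $[n-k-2]$ because $k\geq 3$; this contradicts the definition of $a_{n-k-1}$. The harder subcase is $n-k\notin L$: one must show that $a_{n-k-1}+a_{n-k}$ has no legal decomposition inside $[n-k-2]$. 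My plan is to iterate the peeling-and-rewrite argument inside this smaller problem, using the inductive hypothesis to simplify each residual and arguing by descent that every branch either (i) forces a legal decomposition of some $a_j$ using only $\{a_1,\ldots,a_{j-2}\}$, contradicting the definition of $a_j$, or (ii) collapses to a strictly smaller instance of the same two-term form $a_p + a_{p-k}$ that is already ruled out by the outer induction.

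If $\max L \leq n-2$, I would close this case by a direct upper-bound argument: the legal subsets of $[n-2]$ all lie in unions of arithmetic progressions whose consecutive indices differ by $k-1$ or at least $k+1$, so the inductive recurrence $a_{m+1}=a_m+a_{m-k}$ gives a closed-form bound on $\sum_{\ell\in L}a_\ell$ that lies strictly below $a_n+a_{n-k}$ once $n$ is past a small initial range of base cases verified by computation. The main obstacle I foresee is making the iteration in the harder subcase close cleanly: peeling a decomposition of $a_{n-k-1}+a_{n-k}$ can spawn branches whose residuals do not obviously fit the outer induction, and the natural remedy is to strengthen the inductive hypothesis to simultaneously track the non-decomposability of each two-term sum $a_p+a_{p-k}$ inside both $[p-1]$ and $[p]$ for all relevant $p<n$, so that every reduction lands in an already-proved invariant. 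Confirming that every peeling step feeds into this strengthened hypothesis, rather than opening a new sub-problem, is where the bulk of the technical work will lie.
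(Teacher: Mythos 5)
First, note that the statement you are proving is labeled a \emph{conjecture} in the paper: the authors provide no proof, only computational verification for $2\le k\le 20$ and $n\le 40$, and they explicitly observe (Example~\ref{example:T=1}) that their general machinery, Theorem~\ref{theorem:fixed_fringes}, applies to $T=\{1\}$ only for $k\ge 11$. So the range $3\le k\le 10$ is genuinely open in the paper, and any complete argument for it would be new content rather than a reproof.

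Your proposal is a strategy sketch rather than a proof, and the gap sits exactly where the paper's own difficulty sits. The cases $n\in L$ and ($\max L=n-1$ with $n-k\in L$) are fine and match the standard peeling argument (compare the proof of Lemma~\ref{lemma:B_implies_A} and the $\{2\}$-LID proposition). But the subcase $\max L=n-1$, $n-k\notin L$ reduces to showing that $a_{n-k-1}+a_{n-k}$ admits no legal decomposition inside $[n-k-2]$, and this quantity has the shape $a_m+a_{m+1}$, \emph{not} the shape $a_p+a_{p-k}$; your claim that every branch of the descent ``collapses to a strictly smaller instance of the same two-term form $a_p+a_{p-k}$'' is unsubstantiated, and the needed invariant is a genuinely different inequality. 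This is precisely why the paper introduces the auxiliary statements $B(n)$ and $C_d(n)$ and runs a simultaneous induction (Lemmas~\ref{lemma:B_implies_A}--\ref{lemma:C_induction}, Theorem~\ref{theorem:finite_check}): your proposed ``strengthened inductive hypothesis'' tracking non-decomposability of $a_p+a_{p-k}$ is not strong enough, since the induction also has to carry a bound of the type $a_{m+1}+a_m>a_{m+c}+a_{m-d}$ to control the tail sums $a_{n-1-(k-c)}+a_{n-1-2(k-c)}+\cdots$ appearing in your final case $\max L\le n-2$ (which you likewise assert ``lies strictly below $a_n+a_{n-k}$'' without proof). Even with that machinery in place, the authors' quantitative bounds force $k\ge 11$; your sketch gives no indication of what replaces those bounds for $3\le k\le 10$, beyond an unspecified finite computation. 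A plausible honest route to the full conjecture would be to apply the paper's Theorem~\ref{theorem:finite_check} separately for each $k\in\{3,\dots,10\}$ by explicitly verifying its finitely many base cases $B(n)$ and $C_d(n)$, but that is not what your proposal does.
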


\begin{conjecture} \label{conjecture:computation2}
    Let $k \geq 8$.
    Let $S=[k] \setminus \{k-2\}$.
    Let $a_1,a_2,\dots$ be the $S$-LID sequence.
    Then,
    \begin{equation} \label{equation:computation2}
        a_{n+1}\ =\ a_{n} + a_{n-k}
    \end{equation}
    for all integers $n > k + 2$.
\end{conjecture}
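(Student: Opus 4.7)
The plan is to prove Conjecture~\ref{conjecture:computation2} by strong induction on $n$, combining the lower bound of Proposition~\ref{proposition:lower-bound-relation} with a structural analysis of would-be legal decompositions to obtain the matching upper bound. Set $S = [k]\setminus\{k-2\}$; a subset $L \subseteq \mathbb{Z}_{>0}$ is then $S$-legal exactly when every pairwise difference of its elements lies in $\{k-2\} \cup \{k+1, k+2, \ldots\}$. The key observation is that for $k \geq 5$ any chain of consecutive gaps of size $k-2$ is automatically legal pairwise, because $2(k-2) \geq k+1$; consequently, when $L$ is listed in decreasing order $\ell_1 > \ell_2 > \cdots$, legality reduces to the condition that each consecutive gap $\ell_t - \ell_{t+1}$ lies in $\{k-2\} \cup \{k+1, k+2, \ldots\}$. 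I would begin by computationally verifying the claimed recurrence for an initial range of $n$ (depending on $k$) to bootstrap the induction, and then enter the inductive step.

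For the inductive step, assume $a_{m+1} = a_m + a_{m-k}$ for all $k+2 < m < n$. The lower bound $a_{n+1} \geq a_n + a_{n-k}$ is immediate. For the upper bound, assume for contradiction that there is a legal $L \subseteq [n]$ with $\sum_{\ell \in L} a_\ell = a_n + a_{n-k}$, and split on the top of $L$. Case~A, $n \in L$: the remainder equals $a_{n-k}$, and the next element of $L$ is forced to be either $n-(k-2)$ or at most $n-k-1$; the first option is ruled out since $a_{n-k+2} > a_{n-k}$, so $L \setminus \{n\} \subseteq [n-k-1]$, contradicting the definition of $a_{n-k}$ as the smallest integer without an $S$-LID decomposition in the first $n-k-1$ terms. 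Case~B, $n \notin L$: letting $m = \max L$ and using the telescoped identity $a_n - a_m = \sum_{i=1}^{n-m} a_{n-k-i}$ coming from the inductive recurrence, the residue $(a_n + a_{n-k}) - a_m$ must be decomposed using $L \setminus \{m\}$, whose top index is constrained by the legality of $L$ at $m$; a sub-case analysis on whether $m-(k-2) \in L$ (the only small-gap option) reduces the problem to showing strict inequalities between residues and maximal legal sums with bounded top index.

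The technical heart of the argument is an auxiliary closed form for the maximum legal sum
\begin{equation}
    M_j\ \coloneqq\ \max\Big\{ \sum_{\ell \in L'} a_\ell \,:\, L' \subseteq [j],\ L' \text{ is $S$-legal} \Big\}\ =\ \sum_{i=0}^{\left\lfloor (j-1)/(k-2) \right\rfloor} a_{j - i(k-2)},
\end{equation}
proved by a parallel induction: the chain $\{j, j-(k-2), j-2(k-2), \ldots\}$ is legal for $k \geq 5$ by the key observation, and an exchange argument shows that any other legal $L'$ has strictly smaller sum (shrinking a gap below $k-2$ is illegal, and enlarging one beyond $k-2$ strictly decreases the sum while possibly removing terms). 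Equipped with this formula, Case~B reduces to verifying finitely many inequalities of the shape $M_{n-k-i} + (\text{small correction}) < (\text{residue})$, each of which can be handled by rewriting both sides using the inductive recurrence and estimating via the dominant root $\lambda$ of $x^{k+1} = x^k + 1$.

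The main obstacle I anticipate is the long chain of $(k-2)$-gaps starting just below $n$, namely $L = \{n-1, n-1-(k-2), n-1-2(k-2), \ldots\}$: its total sum is comparable in magnitude to $a_n + a_{n-k}$, and ruling it out uniformly for all $k \geq 8$ requires a delicate quantitative estimate on the ratio $a_j / a_{j-(k-2)} \to \lambda^{k-2}$. The hypothesis $k \geq 8$ in the conjecture likely reflects precisely where this estimate first becomes valid, so I expect the approach to sharpen the threshold ``sufficiently large $k$'' of Theorem~\ref{theorem:fixed_fringes_weak} down to $k = 8$ in the case $T = \{2\}$, with the finitely many remaining small-$k$ cases absorbed into the base-case verification.
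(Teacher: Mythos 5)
This statement is a \emph{conjecture} in the paper: the authors verify it only computationally (for $k\le 20$ and $n\le 62$) and explicitly note that their general machinery does not reach it, so there is no proof of yours to compare against a proof of theirs. Judged on its own terms, your proposal sets up the right frame --- the lower bound from Proposition~\ref{proposition:lower-bound-relation}, a contradiction argument for the upper bound, and a correct disposal of the case $n\in L$ (forcing the second-largest index below $n-k$, exactly as in the first half of Lemma~\ref{lemma:B_implies_A}) --- but the entire difficulty of the problem sits in your Case~B, and there you do not give an argument. Once $n\notin L$, your own bound $M_{n-1}$ shows that everything reduces to the single inequality
\begin{equation*}
    a_n + a_{n-k}\ >\ a_{n-1} + a_{n-1-(k-2)} + a_{n-1-2(k-2)} + \cdots,
\end{equation*}
which is precisely the statement $B(n)$ of Subsection~\ref{subsection:finite-check} with $c=2$. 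You correctly identify the long chain of $(k-2)$-gaps as the obstruction, but then defer it to ``a delicate quantitative estimate on $a_j/a_{j-(k-2)}\to\lambda^{k-2}$'' that is never carried out. That deferral is the gap: the estimate is asymptotic, whereas you need an effective inequality valid for every $n>k+2$ and uniformly for every $k\ge 8$, and deriving such bounds from the inductive hypothesis is exactly what the paper's simultaneous induction on $A(n)$, $B(n)$, $C_d(n)$ (Lemmas~\ref{lemma:C_implies_B}--\ref{lemma:C_induction} and Theorem~\ref{theorem:finite_check}) is built to do. Even that machinery only proves the recurrence for $k$ above a threshold depending on $T$, and Example~\ref{example:T=1} shows that for the \emph{easier} set $T=\{1\}$ the method needs $k\ge 11$; for $T=\{2\}$ the analogous threshold is larger still. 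So your expectation that the dominant-root estimate ``first becomes valid'' at $k=8$ is unsupported, and nothing in the proposal closes the range of $k$ between $8$ and whatever threshold a quantitative argument would actually deliver. As written, the proposal is a plausible reduction of the conjecture to $B(n)$ plus finitely many computations, not a proof; to make it one you would need either to execute the paper's finite check (Theorem~\ref{theorem:finite_check}) with explicit constants for $T=\{2\}$ and separately handle the remaining small $k\ge 8$, or to supply a genuinely new uniform estimate replacing $C_d(n)$.
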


Equation~\eqref{equation:computation1} holds for all integers $k$ and $n$ satisfying $2 \le k \le 20$ and all $k + 1 \le n \le 40$.
Equation~\eqref{equation:computation2} holds for all integers $k$ and $n$ satisfying $2 \le k \le 20$ and all $k + 1 \le n \le 62$.



%
%

%


\section{Recurrence relations for \texorpdfstring{$S$}{S}-LID sequences} \label{section:recurrence}

In this section, we prove Theorem~\ref{theorem:fixed_fringes_weak}, which is a natural generalization of Proposition~\ref{proposition:k-slid}, Conjecture~\ref{conjecture:computation1} and Conjecture~\ref{conjecture:computation2}.
The version of Theorem~\ref{theorem:fixed_fringes_weak} we prove by the end of this section is 
Theorem~\ref{theorem:fixed_fringes}, in which we explicitly define what ``sufficiently large $k$'' means.

The proof is split into two parts.
In Subsection~\ref{subsection:finite-check}, we describe a condition that, if a set $S$ satisfies, then the Equation~\eqref{equation:general_recurrence} holds for the $S$-LID sequence for all suitable $n$.
In Subsection~\ref{subsection:fixed_fringes}, we show that for all finite sets $T$, for all sufficiently large $k$, the set $[k]\setminus (k-T)$ satisfies the condition from Subsection~\ref{subsection:finite-check}, hence Equation~\eqref{equation:general_recurrence} holds for the $\big([k]\setminus (k-T)\big)$-LID sequence for all suitable $n$.

\subsection{Finite check to prove recurrence} \label{subsection:finite-check}

We now prove some preliminary results about $S$-LID sequences that outline a finite check that can be used to show that certain $S$-LID sequences satisfy the recurrence relation given in Equation~\eqref{equation:general_recurrence}. Fix $(a_n)_{n=1}^\infty$ to be the $S$-LID sequence corresponding to a set $S$ with largest element $k$ such that the smallest positive integer not in $S$ is $k-c$ for some $c>0$. Note from Proposition~\ref{proposition:lower-bound-relation} that the recurrence always gives a lower bound for the next term in the sequence.

The proof of our finite check follows by simultaneous induction on the following statements, where $d$ is a positive integer:
\begin{align*}
    A(n):\quad&a_{n + 1}\ =\ a_n + a_{n-k}, \\
    B(n):\quad&a_n + a_{n-k}\ >\ \sum_{\mathclap{{0\le i(k-c)< n-1}}} a_{n-1-i(k-c)}\ =\ a_{n-1} + a_{n-1-(k-c)} + a_{n-1-2(k-c)} + \cdots, \\
    C_d(n):\quad&a_{n+1} + a_{n}\ >\ a_{n+c} + a_{n-d}. \\
\end{align*}

In particular, we will show that if $A(n)$, $B(n)$, and $C_d(n)$ all hold for certain intervals of base cases (for a certain value of $d$), then $A(n)$ holds for $n>k+c$. We first prove a series of lemmas showing how these statements relate to each other.

\begin{lemma}\label{lemma:B_implies_A}
    $B(n)$ implies $A(n)$.
\end{lemma}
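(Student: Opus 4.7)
The plan is to use Proposition~\ref{proposition:lower-bound-relation}, which already gives $a_{n+1} \ge a_n + a_{n-k}$, to reduce the problem to the reverse inequality $a_{n+1} \le a_n + a_{n-k}$; equivalently, I need to show that $a_n + a_{n-k}$ admits no $S$-LID decomposition using $\{a_1, \dots, a_n\}$, so that $a_{n+1}$ cannot exceed it. So I would assume for contradiction that some $L \subset [n]$ satisfies $\sum_{\ell \in L} a_\ell = a_n + a_{n-k}$ with $|i-j| \notin S$ for all distinct $i, j \in L$, and then split on whether $n \in L$.

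In the case $n \in L$, the remaining indices in $L \setminus \{n\}$ must $S$-LID decompose $a_{n-k}$. Because $k \in S$ (as $k = \max S$), no $\ell \in L \setminus \{n\}$ can equal $n - k$. Moreover, every remaining summand is at most $a_{n-k}$, so the largest remaining index $\ell_2$ must satisfy $a_{\ell_2} \le a_{n-k}$, hence $\ell_2 \le n-k$ by monotonicity; combined with $\ell_2 \ne n-k$ this forces $\ell_2 \le n-k-1$. But then $L \setminus \{n\} \subset [n-k-1]$ would yield an $S$-LID decomposition of $a_{n-k}$ using only the first $n-k-1$ terms, directly contradicting the defining property of $a_{n-k}$.

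In the case $n \notin L$, every index in $L$ lies in $[n-1]$, and since $k-c$ is the smallest positive integer not in $S$, any two elements of $L$ differ by at least $k-c$. Listing $\ell_1 > \ell_2 > \cdots > \ell_m$, we get $\ell_i \le n-1-(i-1)(k-c)$, and monotonicity of $(a_n)$ yields
\begin{equation*}
    \sum_{\ell \in L} a_\ell\ \le\ \sum_{0 \le i(k-c) < n-1} a_{n-1-i(k-c)}.
\end{equation*}
The right-hand side is exactly the expression bounded in $B(n)$, so the hypothesis $B(n)$ gives $\sum_{\ell \in L} a_\ell < a_n + a_{n-k}$, contradicting the standing assumption on $L$. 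Combining both cases, no such $L$ exists, so $a_{n+1} \le a_n + a_{n-k}$, and equality $A(n)$ follows.

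The only genuinely delicate step is the first case: one must notice that the membership $k \in S$ rules out $\ell_2 = n-k$, while monotonicity of the sequence rules out $\ell_2 > n-k$, so that the only way for the case to survive is to produce an illegal decomposition of $a_{n-k}$. Everything else is index bookkeeping and a direct application of $B(n)$, so I do not expect further obstacles.
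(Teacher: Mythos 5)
Your proof is correct and follows essentially the same route as the paper's: reduce via Proposition~\ref{proposition:lower-bound-relation} to showing $a_n+a_{n-k}$ has no $S$-LID decomposition using the first $n$ terms, rule out $n\in L$ by using $k\in S$ and monotonicity to force an illegal decomposition of $a_{n-k}$ from the first $n-k-1$ terms, and then bound $\sum_{\ell\in L}a_\ell$ by the minimal-gap sum $a_{n-1}+a_{n-1-(k-c)}+\cdots$ to contradict $B(n)$. You state it as a direct contradiction where the paper argues the contrapositive, and you spell out the index bookkeeping for $\ell_2\le n-k-1$ slightly more explicitly, but these are cosmetic differences.
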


\begin{proof}
     Assume $A(n)$ does not hold. By Proposition~\ref{proposition:lower-bound-relation},  $a_n+a_{n-k}$ has an $S$-LID decomposition using $(a_1,\ldots,a_{n})$, i.e., there exists $L\subset [n]$ such that $|\ell_1-\ell_2|\notin S$ for all $\ell_1,\ell_2\in L$ and
    \begin{equation}
        a_n + a_{n-k}\ =\ \sum_{\ell\in L} a_\ell.
    \end{equation}
    If $n\in L$, we would have
    \begin{equation}
        a_{n-k}\ =\ \sum_{\ell\in L\setminus\{n\}} a_\ell.
    \end{equation}
    If this was the case, $n-k\notin L$ because $|n-(n-k)| = k \in S$. Since $(a_i)_{i=1}^{\infty}$ is increasing, this would yield an $S$-LID decomposition of $a_{n-k}$ using $(a_1,\ldots,a_{n-(k+1)})$, which cannot occur. Therefore, $n\notin L$.
    
    Given that $n \notin L$, the value for $\sum_{\ell\in L} a_\ell$ is at most
    \begin{equation}
        \sum_{\mathclap{{0\le i(k-c)< n-1}}} a_{n-1-i(k-c)}\ =\ a_{n-1}+a_{n-1-(k-c)}+a_{n-1-2(k-c)}+\cdots.
    \end{equation}
    This is because because $k-c$ is the smallest positive integer not in $S$, and thus the smallest legal index gap in any $S$-LID decomposition. Because $(a_n)_{n=1}^{\infty}$ is increasing, beginning with $a_{n-1}$ and choosing the smallest gap for each term yields an upper bound for the value for this sum. Therefore,
    \begin{equation}
        a_n + a_{n-k}\ =\ \sum_{\ell\in L} a_\ell\ \le\ a_{n-1} + a_{n-1-(k-c)} + a_{n-1-2(k-c)} + \cdots,
    \end{equation}
    Therefore, $B(n)$ also does not hold. Thus, $B(n)$ implies $A(n)$.
\end{proof}

\begin{lemma}
\label{lemma:arith-series}
    Let $c'$ be a positive integer satisfying $k \geq 2(c'+1)$. Then, for all $n\ge 1$,
    \begin{equation}\label{equation:arith-series-lemma}
        a_n\ >\ \sum_{\mathclap{k-c'\le i(k-c')< n}} a_{n-i(k-c')} \ =\ a_{n-(k-c')}+a_{n-2(k-c')}+\cdots.
    \end{equation}
\end{lemma}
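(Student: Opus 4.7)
The plan is to proceed by strong induction on $n$, writing $g = k - c'$ for brevity. For the base cases, whenever $n \leq g$ the indexing condition $k - c' \leq i(k-c') < n$ has no integer solutions, so the sum on the right-hand side is empty and the bound $a_n \geq 1 > 0$ is immediate (recall $a_1 = 1$ and the sequence is strictly increasing).

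For the inductive step, fix $n > g$ and assume the claim for all smaller indices. Applying the inductive hypothesis at $n - g \geq 1$ gives
\begin{equation*}
    a_{n-g}\ >\ a_{n-2g} + a_{n-3g} + \cdots,
\end{equation*}
and adding $a_{n-g}$ to both sides reduces the goal to the cleaner inequality
\begin{equation*}
    a_n\ \geq\ 2\, a_{n-g}.
\end{equation*}

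To establish the latter, I would iterate Proposition~\ref{proposition:lower-bound-relation} ($a_{m+1} \geq a_m + a_{m-k}$ for $m \geq k+1$) and telescope for $m = n-1, n-2, \ldots, n-g$, which yields
\begin{equation*}
    a_n\ \geq\ a_{n-g} + \bigl(a_{n-k-1} + a_{n-k-2} + \cdots + a_{n-k-g}\bigr).
\end{equation*}
Here the hypothesis $k \geq 2(c'+1)$, which is equivalent to $g \geq c' + 2$, becomes essential: the sum contains $g$ consecutive terms of the sequence whose largest index is $n-k-1$ and whose largest lies only $c' + 1$ indices below $n - g$, so a block of $g \geq c' + 2$ consecutive terms outpaces a single term only $c'$ positions above it. Plan to formalize this last step either by a short auxiliary induction on $g$, or by noting that monotonicity gives $a_{n-k-1} \geq a_{n-g}$ trivially once we inflate the block using $a_{n-k-1} \geq a_{n-k-2} + a_{n-2k-2}$ recursively; the hypothesis on $k$ is what guarantees this inflation can be carried out before exhausting the available indices.

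The small cases $n \leq k+1$ where Proposition~\ref{proposition:lower-bound-relation} is not yet applicable should be handled separately: for such $n$ the $S$-LID sequence begins arithmetically (roughly $a_m = m$ until an index gap of size $k-c$ first becomes available), and the inequality $a_n \geq 2 a_{n-g}$ is checked by direct arithmetic, using again that $g \geq c' + 2$. The main obstacle is the telescoped block inequality $\sum_{j=1}^{g} a_{n-k-j} \geq a_{n-g}$; it is precisely here that the assumption $k \geq 2(c'+1)$ is delicately needed, since without enough separation between $c'$ and $k$ one could not guarantee that a window of $g$ consecutive earlier terms dominates a single later term.
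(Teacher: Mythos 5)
Your high-level plan (induct, telescope Proposition~\ref{proposition:lower-bound-relation} through a block of $k-c'$ consecutive terms, and use $k\ge 2(c'+1)$ to compare) is in the same spirit as the paper's proof, but the reduction you actually make contains a genuine gap. Writing $g=k-c'$, you reduce the inductive step to $a_n\ge 2a_{n-g}$, which after telescoping becomes the block inequality $\sum_{j=1}^{g} a_{n-k-j}\ \ge\ a_{n-g}=a_{n-k+c'}$. This is an \emph{upper} bound on $a_{n-k+c'}$ in terms of strictly earlier terms, and nothing you invoke can produce one: Proposition~\ref{proposition:lower-bound-relation} and monotonicity only bound later terms from \emph{below} by earlier ones. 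Your proposed justification is in fact backwards --- since $n-k-1<n-k+c'=n-g$, monotonicity gives $a_{n-k-1}\le a_{n-g}$, not $\ge$. The step is not merely unproved but unprovable from the stated ingredients: any sequence with $a_{m+1}\ge 2a_m$ (e.g.\ powers of $2$) satisfies the conclusion of Proposition~\ref{proposition:lower-bound-relation} yet violates your block inequality whenever $c'\ge 1$, because then $\sum_{j=1}^{g}a_{n-k-j}<2a_{n-k-1}\le a_{n-k+c'}$. To salvage this route you would have to import an upper bound on growth from the definition of the $S$-LID sequence itself (namely $a_{m+1}\le 1+a_m+a_{m-(k-c)}+a_{m-2(k-c)}+\cdots$), which you never do, and which would also entangle the lemma with the parameter $c$ that its statement does not assume is related to $c'$.

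The paper avoids this entirely by applying the induction hypothesis two gaps back, at $n-2(k-c')$, rather than one gap back. After telescoping Proposition~\ref{proposition:lower-bound-relation} exactly $k-c'$ times one gets $a_n\ \ge\ a_{n-(k-c')}+\sum_{i=n-(k-c')-k}^{n-1-k}a_i$, and the hypothesis $k\ge 2(c'+1)$ is used precisely to check that the two indices $n-2(k-c')$ and $n-2(k-c')+1$ both lie inside that block; discarding everything else gives $a_n\ >\ a_{n-(k-c')}+2a_{n-2(k-c')}$, and one of the two copies of $a_{n-2(k-c')}$ is then expanded by the induction hypothesis to produce the remaining tail $a_{n-3(k-c')}+a_{n-4(k-c')}+\cdots$. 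Only lower bounds are ever needed. Your argument can be repaired by switching to this two-steps-back induction (with base cases $n\le 2(k-c')$, where the right-hand side has at most one summand and monotonicity suffices).
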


\begin{proof}
       Induct on $n$. For a base case, for all $1 \le n\le 2(k-c')$, the right hand side has at most one summand of index strictly smaller than $n$. Now take $n>2(k-c')$ and suppose as an induction hypothesis Equation~\eqref{equation:arith-series-lemma} for $n-2(k-c')$, that is
    \begin{equation}\label{equation:gap_sum_induct}
        a_{n-2(k-c')}\ >\ a_{n-3(k-c')} + a_{n-4(k-c')} + \cdots.
    \end{equation}
    Applying Proposition~\ref{proposition:lower-bound-relation} $k-c'$ times, we have
    \begin{equation} \label{equation:indexed-sum}
        \begin{aligned}
            a_n\ &\ge\ a_{n-1}+a_{n-1-k}\\
                 &\ge\ (a_{n-2}+a_{n-2-k})+a_{n-1-k}\\
                 &\ge\ ((a_{n-3}+a_{n-3-k})+a_{n-2-k})+a_{n-1-k}\\
                 &\,\,\,\vdots\\
                 &\ge\ a_{n-(k-c')}+\sum_{i=n-(k-c')-k}^{n-1-k}a_i,
        \end{aligned}
    \end{equation}
    where we say $a_i=0$ if $i\le 0$, which is relevant only for $n\le 2k-c'$. Because $k\ge 2(c'+1)$, we have $n-(k-c')-k<n-2(k-c')$ and $n-2(k-c')+1\le n-1-k$, so $a_{n-2(k-c')+1}$ and $a_{n-2(k-c')}$ are both in the summation in Equation~\eqref{equation:indexed-sum}. Removing all other summands but these two gives
    \begin{equation}
        a_n\ \ge\ a_{n-(k-c')}+a_{n-2(k-c')}+a_{n-2(k-c')+1}\ >\ a_{n-(k-c')}+a_{n-2(k-c')}+a_{n-2(k-c')}.
    \end{equation}
    Using Equation~\eqref{equation:gap_sum_induct} to rewrite the second $a_{n-2(k-c')}$ gives 
    \begin{equation}
        a_n\ >\ a_{n-(k-c')}+a_{n-2(k-c')}+ a_{n-3(k-c')} + a_{n-4(k-c')} + \cdots
    \end{equation}
    which completes the inductive proof.
\end{proof}

\begin{corollary}\label{cor:gap_sum_bound}
    For all $n\ge 1$,
    \begin{equation}
        a_{n+(k-c')}\ >\ \sum_{\mathclap{0\le i(k-c)<n}} a_{n-i(k-c)} \ =\ a_n + a_{n-(k-c)} + a_{n-2(k-c)} + \cdots
    \end{equation}
    for all $c'\ge c>0$ with $k\ge 2(c'+1)$.
\end{corollary}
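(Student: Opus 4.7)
The plan is to obtain the corollary as an almost immediate consequence of Lemma~\ref{lemma:arith-series}, combined with the monotonicity of $(a_n)_{n=1}^\infty$ and the elementary inequality $k-c' \le k-c$. Since the hypothesis $k \ge 2(c'+1)$ of the lemma is exactly what we assume here, I would apply the lemma with $n$ replaced by $n + (k-c')$, which gives
\begin{equation}
a_{n+(k-c')}\ >\ a_{(n+(k-c'))-(k-c')} + a_{(n+(k-c'))-2(k-c')} + \cdots\ =\ a_n + a_{n-(k-c')} + a_{n-2(k-c')} + \cdots,
\end{equation}
where the sum on the right runs over $i \ge 1$ such that $n - i(k-c') \ge 1$ (using the convention $a_j = 0$ for $j \le 0$, as in the proof of Lemma~\ref{lemma:arith-series}).

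It then remains to compare this tail sum with the one appearing on the right-hand side of the corollary, namely $a_n + a_{n-(k-c)} + a_{n-2(k-c)} + \cdots$. The assumption $c' \ge c$ gives $k-c' \le k-c$, so for every $i \ge 1$ we have $n - i(k-c') \ge n - i(k-c)$, and monotonicity of the sequence yields $a_{n-i(k-c')} \ge a_{n-i(k-c)}$. Furthermore, the smaller step $k-c'$ keeps the indices positive for at least as many values of $i$ as the step $k-c$, so the sum with step $k-c'$ has no fewer nonzero summands than the one with step $k-c$. Adding $a_n$ to both tail sums and combining with the strict inequality above gives the desired bound.

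There is no real obstacle: the proof is a one-line invocation of Lemma~\ref{lemma:arith-series} followed by a routine term-by-term comparison. The only point to check carefully is that the index ranges of the two arithmetic-step sums align correctly, which is handled by the observations $k-c' \le k-c$ and the fact that $(a_n)_{n=1}^\infty$ is increasing and positive.
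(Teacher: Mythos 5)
Your proposal is correct and follows essentially the same route as the paper: apply Lemma~\ref{lemma:arith-series} at index $n+(k-c')$ and then compare the step-$(k-c')$ tail sum to the step-$(k-c)$ tail sum term by term using $c'\ge c$ and the monotonicity of the sequence. Your write-up is actually slightly more careful than the paper's about why the index ranges align, but the argument is the same.
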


\begin{proof}
   Because $c'\ge c$ and the sequence is increasing
    \begin{equation}
        a_\ell + a_{\ell-(k-c')} + a_{\ell-2(k-c')} + \cdots \ \ge\ a_\ell + a_{\ell-(k-c)} + a_{k-2(k-c)} + \cdots
    \end{equation}
    The result then follows from applying Lemma~\ref{lemma:arith-series} to the left hand side.
\end{proof}

\begin{lemma}\label{lemma:C_implies_B}
    $C_d(n-k-1)$ implies $B(n)$ for $k\ge 2d-4c+2$.
\end{lemma}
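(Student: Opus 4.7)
The plan is to derive $B(n)$ from $C_d(n-k-1)$ by telescoping three facts already established: the fundamental lower bound of Proposition~\ref{proposition:lower-bound-relation}, the hypothesis $C_d(n-k-1)$ itself, and the geometric tail estimate in Corollary~\ref{cor:gap_sum_bound}. The strategy is to unpack the right-hand side of $B(n)$ one summand at a time.

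To produce the leading term $a_{n-1}$ of the sum in $B(n)$, I would first apply Proposition~\ref{proposition:lower-bound-relation} in the form $a_n \ge a_{n-1} + a_{n-k-1}$, which reduces the goal of $B(n)$ to showing
\[
    a_{n-k-1} + a_{n-k} \ > \ a_{n-1-(k-c)} + a_{n-1-2(k-c)} + a_{n-1-3(k-c)} + \cdots .
\]
The left-hand side is now exactly the left-hand side of $C_d(n-k-1)$, and the algebraic identity $(n-k-1)+c = n-1-(k-c)$ shows that the right-hand side of $C_d(n-k-1)$, namely $a_{(n-k-1)+c} + a_{(n-k-1)-d}$, supplies precisely the next term $a_{n-1-(k-c)}$ of $B(n)$'s sum together with a residual $a_{n-k-1-d}$. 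Invoking $C_d(n-k-1)$ therefore collapses the goal to the tail estimate
\[
    a_{n-k-1-d} \ \ge \ a_{n-1-2(k-c)} + a_{n-1-3(k-c)} + a_{n-1-4(k-c)} + \cdots .
\]

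The final and most delicate step is to recognize this tail as a shifted instance of Corollary~\ref{cor:gap_sum_bound}. Setting $m = n-1-2(k-c)$, the corollary with a carefully chosen auxiliary parameter $c'$ yields
\[
    a_{m+(k-c')} \ > \ a_m + a_{m-(k-c)} + a_{m-2(k-c)} + \cdots ,
\]
matching the tail exactly. Two competing index inequalities then drive the quantitative hypothesis on $k$: to conclude $a_{n-k-1-d} \ge a_{m+(k-c')}$ from monotonicity of $(a_n)$ requires $n-k-1-d \ge m+(k-c')$, while the corollary itself requires $k \ge 2(c'+1)$. The first inequality pushes $c'$ upward in terms of $d$ and $c$, and the second then forces $k$ to grow commensurately. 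Balancing these two constraints to minimize the resulting threshold on $k$ is the main bookkeeping obstacle, and it is this optimization that produces the stated bound $k \ge 2d - 4c + 2$; everything else in the argument is a direct chaining of the three inequalities above.
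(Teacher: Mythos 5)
Your proposal follows the paper's proof step for step: Proposition~\ref{proposition:lower-bound-relation} to peel off $a_{n-1}$, then $C_d(n-k-1)$ (using $(n-k-1)+c = n-1-(k-c)$) to absorb the next summand, then Corollary~\ref{cor:gap_sum_bound} applied at $m = n-1-2(k-c)$ to dominate the tail. One caution on the single step you defer: carrying out the balancing gives $m+(k-c') = n-k-1+2c-c'$, so you need $c' \ge d+2c$ and hence $k \ge 2(c'+1) = 2d+4c+2$, not $2d-4c+2$; the paper's own proof makes a sign slip at exactly this point (writing $n-k-1-2c-c'$ for that index), so the threshold in the lemma statement should really be $k\ge 2d+4c+2$, which is harmless for the eventual ``sufficiently large $k$'' theorem but worth noting.
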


\begin{proof}
    By Proposition~\ref{proposition:lower-bound-relation}, we have $a_n-a_{n-1}\ge a_{n-(k+1)}$, so to prove $B(n)$ it suffices to show that
    \begin{equation}
        a_{n-k} + a_{n-(k+1)}\ >\ a_{n-1-(k-c)} + a_{n-1-2(k-c)} + \cdots.
    \end{equation}
    Assuming $C_d(n-k-1)$, we have
    \begin{equation}
        a_{n-k}+a_{n-k-1} - a_{n-k-1+c}\ >\ a_{n-k-1-d}
    \end{equation}
    so it suffices to show that
    \begin{equation}
        a_{n-k-1-d}\ >\ a_{n-1-2(k-c)}+a_{n-1-3(k-c)}+\cdots.
    \end{equation}
    By Corollary~\ref{cor:gap_sum_bound}, the right hand side is bounded above by
    \begin{equation}
        a_{n-1-2(k-c)+(k-c')}\ =\ a_{n-k-1-2c-c'}
    \end{equation}
    for all $c'\ge c$ and $k\ge 2(c'+1)$. We have $n-k-1-d \ge n-k-1-2c-c'$ if and only if $c' \ge d-2c$, so for $k\ge 2(d-2c+1) = 2d-4c+2$ we obtain the desired inequation.
\end{proof}

\begin{lemma}\label{lemma:C_induction}
    $C_d(n)$, $C_d(n-k)$, $A(n+1)$, $A(n)$, $A(n+c)$, and $A(n-d)$ together imply $C_d(n+1)$.
\end{lemma}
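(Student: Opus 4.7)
The strategy is a direct algebraic manipulation: substitute the four recurrence identities $A$ into both sides of the target inequation $C_d(n+1)$, and then the resulting inequation decomposes cleanly as the sum of the two assumed $C$-inequations.

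In detail, I would first rewrite the left-hand side of $C_d(n+1) : a_{n+2} + a_{n+1} > a_{n+1+c} + a_{n+1-d}$ using $A(n+1)$ and $A(n)$, which give
\begin{equation}
    a_{n+2} + a_{n+1}\ =\ (a_{n+1} + a_{n+1-k}) + (a_n + a_{n-k}).
\end{equation}
Then I would rewrite the right-hand side using $A(n+c)$ and $A(n-d)$, which give
\begin{equation}
    a_{n+1+c} + a_{n+1-d}\ =\ (a_{n+c} + a_{n+c-k}) + (a_{n-d} + a_{n-d-k}).
\end{equation}
Thus proving $C_d(n+1)$ is equivalent to establishing
\begin{equation}
    (a_{n+1} + a_n) + (a_{n+1-k} + a_{n-k})\ >\ (a_{n+c} + a_{n-d}) + (a_{n+c-k} + a_{n-d-k}).
\end{equation}

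Now the key observation is that this combined inequation is precisely the sum of $C_d(n)$ (which bounds $a_{n+1}+a_n$ below by $a_{n+c}+a_{n-d}$) and $C_d(n-k)$ (which, after reindexing $m=n-k$, reads $a_{n-k+1}+a_{n-k} > a_{n-k+c}+a_{n-k-d}$, matching the second parenthesized pair on each side). Adding these two strict inequations produces exactly the required inequation, finishing the proof.

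There is essentially no obstacle: the lemma is a bookkeeping step designed so that the four $A$ hypotheses can be applied once on each of the four terms that appear, after which the two $C$ hypotheses telescope to give the conclusion. The only mild subtlety is to make sure all indices involved are in the valid range so that the invoked instances of $A$ and $C_d$ are well-defined; this is implicit in the hypothesis list (e.g., $n-d-k$ appearing via $A(n-d)$ is consistent with the other index constraints).
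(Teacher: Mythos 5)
Your proof is correct and is essentially identical to the paper's: the paper adds $C_d(n)$ and $C_d(n-k)$ and then applies $A(n+1)$, $A(n)$, $A(n+c)$, $A(n-d)$ to collapse the resulting sums, whereas you run the same substitutions in the reverse direction starting from the goal $C_d(n+1)$. The two are the same argument.
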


\begin{proof}
   $C_d(n)$ and $C_d(n-k)$ imply that
    \begin{align}
        a_{n+1} + a_{n}     \ &>\ a_{n+c} + a_{n-d},\\
        a_{n+1-k} + a_{n-k} \ &>\ a_{n+c-k} + a_{n-d-k}.
    \end{align}
    Adding these inequations together and applying the recursion relations $A(n+1)$, $A(n)$, $A(n+c)$, and $A(n-d)$ gives
    \begin{equation}
        a_{n+2}+a_{n+1}\ >\ a_{n+c+1} + a_{n-d+1},
    \end{equation}
    which is precisely $C_d(n+1)$.
\end{proof}

We now put these results together.

\begin{theorem}\label{theorem:finite_check}
    Let $S$ be a finite set of positive integers with largest element $k$, and let $k-c$ be the smallest positive integer not in $S$ (so $S$ is not $[k]$). Let $(a_n)_{n=1}^\infty$ be the $S$-LID sequence. Suppose that there exists $d>0$ such that
    \begin{itemize}
        \item $k\ge 2d-4c+2$, 
        \item $C_d(n)$ holds for $c+d+1\le n\le k+c+1+d$, and
        \item $B(n)$ holds for $k+c+1\le n\le 2k+c+d+2$. 
    \end{itemize}
    Then
    \begin{equation}\label{equation:finite_check_recursion}
        a_{n+1}\ =\ a_{n} + a_{n-k}
    \end{equation}
    for all integers $n > k + c$.
\end{theorem}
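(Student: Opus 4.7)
The plan is to run a strong induction on $n$ that simultaneously establishes $C_d(n)$ for all $n \ge c+d+1$ and $A(n)$ for all $n \ge k+c+1$, using the three lemmas already in hand as the engine. First, applying Lemma~\ref{lemma:B_implies_A} to the $B(n)$ hypothesis gives $A(n)$ for $k+c+1 \le n \le 2k+c+d+2$, so we have a block of $A$-values available from the start, in addition to the block of $C_d$-values $c+d+1 \le n \le k+c+d+1$ given by hypothesis.

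The inductive step is to extend $C_d$. For $n \ge k+c+d+2$, I would apply Lemma~\ref{lemma:C_induction} (with its index shifted down by one) to conclude $C_d(n)$ from $C_d(n-1)$, $C_d(n-1-k)$, $A(n)$, $A(n-1)$, $A(n-1+c)$, and $A(n-1-d)$. Both $C_d$ inputs lie in the induction hypothesis range once $n \ge k+c+d+2$: the smaller one is $n-1-k \ge c+d+1$. Each $A$-input at index $m$ is handled in one of two ways: if $k+c+1 \le m \le 2k+c+d+2$, it is already established; otherwise $m > 2k+c+d+2$, and I would derive $A(m)$ from $C_d(m-k-1)$ via Lemma~\ref{lemma:C_implies_B} followed by Lemma~\ref{lemma:B_implies_A}. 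The hypothesis $k \ge 2d-4c+2$ is exactly what Lemma~\ref{lemma:C_implies_B} requires. After proving $C_d(n)$ for all $n \ge c+d+1$, the remaining $A(n)$ values (those beyond the base block, i.e., $n > 2k+c+d+2$) follow immediately from the same two lemmas.

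The main obstacle is the index bookkeeping — specifically, verifying that whenever the inductive step at $n$ invokes $A(m)$ for some $m$ outside the base block, the needed $C_d(m-k-1)$ has already been proved by the IH. The largest such $m$ appearing in the application of Lemma~\ref{lemma:C_induction} is $n-1+c$, giving $m-k-1 = n+c-k-2$. Since $c \le k-1$ (because $k-c \ge 1$ is the smallest positive integer outside $S$), this index is strictly less than $n$, so it falls within the IH. A parallel check shows $n+c-k-2 \ge c+d+1$ whenever $n \ge k+d+3$, which is comfortably satisfied once $n \ge k+c+d+2$. Once every dependency has been accounted for, the induction closes and gives $A(n)$, i.e., Equation~\eqref{equation:finite_check_recursion}, for every $n > k+c$.
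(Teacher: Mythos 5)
Your argument is correct and is essentially the paper's own proof: the same simultaneous induction driven by Lemmas~\ref{lemma:B_implies_A}, \ref{lemma:C_implies_B}, and \ref{lemma:C_induction}, with the hypothesized blocks of $B(n)$ and $C_d(n)$ serving as base cases. The only difference is cosmetic bookkeeping --- you index the induction by the $C_d$ statement and derive the needed $A(m)$ on demand from earlier $C_d$ values, while the paper indexes by $n$ in $A(n)$, $B(n)$ and carries $C_d(n-k-1)$ along; the dependency checks are equivalent.
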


\begin{proof}
    Suppose first that $k+c+1\le n\le 2k+c+d+2$. Then by Lemma~\ref{lemma:B_implies_A}, $A(n)$ holds because $B(n)$ holds. Now take $n>2k+c+d+2$, and suppose for induction that $A(m)$ and $B(m)$ hold for $k+c+1\le m<n$ and $C_d(m-k-1)$ holds for $k+c+d+2\le m < n$.
    
    Note that, because $n\ge 2k+c+d+3$ and $k > c$,
    \begin{equation}
        k+c=1\ \le\ n-k-2-d\ \le\ n-k-2\ <\ n-k-1\ \le\ n-k-2+c\ <\ n,
    \end{equation}
    therefore $A(n-k-2)$, $A(n-k-1)$, $A(n-k-2+c)$, and $A(n-k-2-d)$ hold.
    Moreover, $C_d(n-k-2)$ and $C_d(n-2k-2)$ both hold because $n-2k-2 \ge c+d+1$, and $n-k-2<n-k-1$. Therefore, $C_d(n-k-1)$ also holds by Lemma~\ref{lemma:C_induction}.
    
    By Lemma~\ref{lemma:C_implies_B}, we then have $B(n)$ as $k\ge 2d-4c+2$, which then implies $A(n)$ by Lemma~\ref{lemma:B_implies_A}. We have thus increased by $1$ the values of $n$ such that $A(n)$, $B(n)$, and $C_d(n-k-1)$ all hold. By induction, we conclude that $A(n)$ and $B(n)$ hold for all $n\ge k+c+1$ and $C_d(n-k-1)$ holds for all $n\ge k+c+d+2$. In particular, Equation~\eqref{equation:finite_check_recursion} holds for all $n>k+c$.
\end{proof}

Given any fixed $S$, there are only finitely many $d>0$ such that $k\ge 2d-4c+2$, so this theorem gives a finite check to show that any $S$-LID sequence satisfies the expected recurrence relation. Although these conditions need not occur in every sequence satisfying the recurrence, the next section gives many families of $S$-LID sequences for which this check is sufficient to prove the result.

\subsection{Removing fixed fringes} \label{subsection:fixed_fringes}

In this section we use Theorem~\ref{theorem:finite_check} to prove recurrence relations for $S$-LID sequences $(a_n)_{n=1}^\infty$ when $S = [k]\setminus(k-T)$, where $T$ is a fixed non-empty set of positive integers with largest element $c$ and $k\gg 0$.

\begin{lemma}\label{lemma:initial_values}
    The $S$-LID sequence satisfies $a_n = n$ for $1\le n\le k-c+1$. Moreover, for any integer $i>-k$ there exists an integer $f_T(i)$ such that $a_{k+i} = k + f_T(i)$ for all $k\ge i+2c-1$.
\end{lemma}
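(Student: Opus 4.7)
The plan is to prove both assertions by induction. For the first assertion, I would induct on $n$. The key observation is that $k - c$ is the smallest positive integer outside $S = [k]\setminus(k-T)$ (since $c = \max T$ forces $k - c = \min(k - T)$), so any two indices appearing in an $S$-LID decomposition must differ by at least $k - c$. For $n \le k - c + 1$, two distinct indices in $[n-1]$ differ by at most $n - 2 \le k - c - 1$, so no legal pair fits, and any decomposition using $\{a_1, \ldots, a_{n-1}\}$ consists of at most one summand. Combined with $a_1 = 1$, this forces $a_n = n$ throughout the range.

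For the second assertion, I would induct on $i$. The base case $i \le 1 - c$ falls inside the range of the first assertion, so $a_{k+i} = k + i$ and we set $f_T(i) := i$. For the inductive step, fix $i > 1 - c$ and $k \ge i + 2c - 1$, and assume $f_T(j)$ is defined for all $-k < j < i$. The crucial preliminary observation is that any legal $S$-LID decomposition using indices from $[k + i - 1]$ contains at most two summands: three indices with consecutive gaps $\ge k - c$ would span a range $\ge 2(k - c)$, but the available range is bounded by $k + i - 2$, and $2(k - c) > k + i - 2$ is precisely the hypothesis $k > i + 2c - 2$. So computing $a_{k+i}$ reduces to enumerating singletons and legal pair sums.

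The main step is to check that every expressible value is either in $\{1, \ldots, k - c + 1\}$ or of the form $k + h$ for some integer $h$ depending only on $T$, $i$, and the specific indices chosen — not on $k$ itself. For singletons this is immediate from the inductive hypothesis: $a_\ell = \ell$ for $\ell \le k - c + 1$, and $a_\ell = k + f_T(\ell - k)$ for $k - c + 1 < \ell \le k + i - 1$. For pairs, I would split into two cases based on whether the gap equals $k - t$ for some $t \in T$ or exceeds $k$. In both cases the hypothesis $k \ge i + 2c - 1$ forces the smaller index $\ell_1$ to lie in the trivial range $\ell_1 \le k - c + 1$: in the gap-$(k-t)$ case, having $\ell_1 > k - c + 1$ would push $\ell_2 \ge 2k - 2c + 2 > k + i - 1$; in the gap-exceeding-$k$ case, $\ell_1 \le i - 2$ is forced directly. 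Thus $a_{\ell_1} = \ell_1$ and $a_{\ell_2}$ contributes a shift of $k$ plus a value determined by $f_T$ applied to an index strictly less than $i$, already known by induction.

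Concluding, the set of expressible positive integers equals $\{1, \ldots, k - c + 1\} \cup \{k + h : h \in E_{T, i}\}$, where $E_{T, i}$ is a finite set of integers $\ge 2 - c$ depending only on $T$ and $i$. The smallest non-expressible positive integer is then $k + f_T(i)$ with $f_T(i) := \min\{h \ge 2 - c : h \notin E_{T, i}\}$, as desired. The principal obstacle is organizing the pair-sum case analysis so that no pair sum acquires a value that drifts with $k$; the numerical hypothesis $k \ge i + 2c - 1$ is exactly what rules out pairs with both indices above the trivial range, which would be the only way for $k$-dependence to creep into the offsets.
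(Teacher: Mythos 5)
Your proposal is correct and follows essentially the same route as the paper: both establish $a_n = n$ on the trivial range from the minimal legal gap $k-c$, then induct on $i$, using $k \ge i + 2c - 1$ to cap legal decompositions at two summands and to force the smaller index into the trivial range, so that every expressible value beyond $k-c+1$ is $k$ plus a $k$-independent offset. The only cosmetic difference is that you re-enumerate all singletons and pairs at each step, whereas the paper's inductive hypothesis carries the full set of multi-summand values forward and only analyzes the new decompositions containing $a_{k+i-1}$.
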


\begin{proof}
    The first claim follows from the fact that there are no $S$-LID decompositions using more than one summand from the collection $(a_1,\ldots,a_{k-c})$. This immediately implies the second claim with $f_T(i) = i$ for $-k< i\le -c+1$. Note that the only $S$-LID decomposition using more than one summand using the collection $(a_1,\ldots,a_{k-c+1})$ is $a_1+a_{k-c+1} = k-c$.
    
    Fix $i>-c+1$, and suppose for induction that we have the desired constant $f_T(i-1)$ holding for $k\ge (i-1)+2c-1$. Suppose moreover that, for all such $k$, every element of the set of $S$-LID decompositions with more than one summand using the collection $(a_1,\ldots,a_{k+i-2})$ is of the form $k+(\text{constant independent of $k$})$.  Finally, fix $k\ge i+2c-1$, which in particular implies that we are in a case where the inductive hypothesis applies.  Then we would like to show that every element of the set of $S$-LID decompositions with more than one summand using the collection $(a_1,\ldots,a_{k+i-1})$ is also of the form $k+(\text{constant independent of $k$})$, which will immediately imply that $f_T(i)\coloneqq a_{k+i}-k$ is independent of $k$.
    
    By the inductive hypothesis, we only need to consider the $S$-LID decompositions that contain $a_{k+i-1}$. The decompositions with exactly two summands are of the form
    \begin{equation}
        a_{k+i-1}+a_{k+i-1-(k+r)}\ =\ k+f_T(i-1)+a_{i-1-r}
    \end{equation}
    for $-r\in T$ or $r>0$ such that $i-1-r>0$. Note that the possible choices for $r$ are independent of $k$: either $-r\in T$ or $1\le r<i-1$. Moreover, the values of each $a_{i-1-r}$ are also independent of $k$: we have $a_{i-1-r}=i-1-r$ for all $r$ we consider because the largest possible value for $i-1-r$ is
    \begin{equation}
        i-1+c\ \le\ (k-2c+1)-1+c\ <\ k-c+1.     
    \end{equation}

    Finally, note that the closest summand that can be added to $a_{k+i-1}$ in an $S$-LID decomposition is $a_{k+i-1-(k-c)} = a_{i-1+c}$, to which nothing smaller can legally be added as $i-1+c-(k-c)\le 0$. Hence, we do not need to consider $S$-LID decompositions with more than $2$ summands, and we are done by induction.
\end{proof}

\begin{lemma}\label{lemma:base_case_A_B}
    For any $d>0$, $A(n)$ and $B(n)$ are true for $k+c+1\le n\le 2k+c+d+2$ for all $k\ge 4c+d+1$ such that
    \begin{equation}
        k\ >\ \max_{-2c+2\le i\le c+d+2}\{f_T(i-1+c)-f_T(i-1)-f_T(i)+i+2c-1\}.
    \end{equation}
\end{lemma}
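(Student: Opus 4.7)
The plan is to prove $B(n)$ for all $n \in [k+c+1,\, 2k+c+d+2]$, from which $A(n)$ will follow immediately by Lemma~\ref{lemma:B_implies_A}. I will induct on $n$, working upward from $n=k+c+1$. A preliminary observation under the hypothesis $k \ge 4c+d+1$ is that the sum on the right of $B(n)$ has at most three summands throughout this range, since the next would-be term $a_{n-1-3(k-c)}$ has positive index only when $n \ge 3k-3c+2$, which exceeds $2k+c+d+2$.

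First I handle the ``two-term'' range $k+c+1 \le n \le 2k-2c+1$. Writing $n = k+c+j$ with $1 \le j \le k-3c+1$, Lemma~\ref{lemma:initial_values} supplies the explicit identifications
\[
    a_n = k + f_T(c+j), \quad a_{n-1} = k + f_T(c+j-1), \quad a_{n-k} = c+j, \quad a_{n-1-(k-c)} = 2c+j-1,
\]
so $B(n)$ collapses to $f_T(c+j) - f_T(c+j-1) \ge c$. This is immediate from Proposition~\ref{proposition:lower-bound-relation} applied at index $k+c+j-1$, using that $a_{c+j-1} = c+j-1$. Neither the max condition on $k$ nor the inductive hypothesis enters this range.

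The heart of the argument is the ``three-term'' range $2k-2c+2 \le n \le 2k+c+d+2$. Here I would reparameterize by $i := n - 2k$, which lies in the bounded interval $[-2c+2,\, c+d+2]$---exactly the range of $i$ appearing in the hypothesis. Since $n-1 \ge 2k-2c+1 \ge k+c+1$, the inductive hypothesis $A(n-1)$ supplies $a_n = a_{n-1} + a_{n-1-k}$; cancelling $a_{n-1}$ from both sides of $B(n)$ reduces it to
\[
    a_{n-1-k} + a_{n-k}\ >\ a_{n-1-(k-c)} + a_{n-1-2(k-c)}.
\]
A second application of Lemma~\ref{lemma:initial_values} gives $a_{n-k} = k+f_T(i)$, $a_{n-1-k} = k+f_T(i-1)$, $a_{n-1-(k-c)} = k+f_T(i-1+c)$, and $a_{n-1-2(k-c)} = i+2c-1$. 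The hypothesis $k \ge 4c+d+1$ is precisely what certifies the threshold $k \ge \ell+2c-1$ of that lemma for every index $\ell$ that appears, even in the worst case $i = c+d+2$. The inequality then unpacks into
\[
    k\ >\ f_T(i-1+c) - f_T(i-1) - f_T(i) + i + 2c - 1,
\]
which is exactly the assumed max condition.

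The main obstacle is the bookkeeping: one must verify that every invocation of Lemma~\ref{lemma:initial_values} is uniformly valid over $i \in [-2c+2,\, c+d+2]$, and the calibration $k \ge 4c+d+1$ is essentially tight for that purpose. The conceptual content is simply that reducing $a_n$ via $A(n-1)$ and identifying all four remaining terms explicitly turns $B(n)$ into a purely algebraic statement in $f_T$ over a bounded parameter range; the hypothesis of the lemma is then nothing more than that statement.
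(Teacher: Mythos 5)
Your argument is correct and follows essentially the same route as the paper: the same split into the two-term range $k+c+1\le n\le 2k-2c+1$ and the three-term range $2k-2c+2\le n\le 2k+c+d+2$, the same substitution $n=2k+i$, and the same reduction to the stated max condition on $k$ via Lemma~\ref{lemma:initial_values}. The only (cosmetic) difference is that you invoke the inductive hypothesis $A(n-1)$ to replace $a_n-a_{n-1}$ by $a_{n-k-1}$, whereas the paper gets the same reduction unconditionally from the inequality $a_n-a_{n-1}\ge a_{n-k-1}$ of Proposition~\ref{proposition:lower-bound-relation}, so no induction on $n$ is actually needed.
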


\begin{proof}
    Fix a value of $k$ as in the lemma statement. By Lemma~\ref{lemma:B_implies_A} it suffices to just prove $B(n)$. Suppose first that $k+c+1\le n\le 2k-2c+1$. The upper bound on $n$ implies that $n-1-2(k-c)\le 0$, so the only terms in the desired inequation are
    \begin{equation}
        a_n + a_{n-k}\ >\ a_{n-1}+ a_{n-1-(k-c)}.
    \end{equation}
    By Proposition~\ref{proposition:lower-bound-relation}, we have $a_n-a_{n-1}\ge a_{n-k-1}$, so it suffices to show that
    \begin{equation}
        a_{n-k-1}+a_{n-k}\ >\ a_{n-1-(k-c)}.
    \end{equation}
    The largest index represented in this inequation is $n-1-(k-c)$, and as $n<2k-2c+2$ we have $n-1-(k-c) < k-c+1$. Thus, by Lemma~\ref{lemma:initial_values}, each of the above $a_i$ can be replaced by $i$, and it suffices to prove that
    \begin{equation}
        (n-k-1)+(n-k)\ >\ n-1-(k-c),
    \end{equation}
    which is true for $n>k+c$.
    
    Now suppose that $2k-2c+2\le n \le 2k+c+d+2$. Note that the $a_{n-1-3(k-c)}$ term and beyond vanish in the desired inequation. This is because $n-1-3(k-c)\le 0$ holds for $n=2k+c+d+2$, which follows from the fact that $k\ge 4c+d+1$. Hence, the desired inequation is
    \begin{equation}
        a_n + a_{n-k}\ >\ a_{n-1}+a_{n-1-(k-c)}+a_{n-1-2(k-c)}.
    \end{equation}
    By Proposition~\ref{proposition:lower-bound-relation}, $a_n-a_{n-1}\ge a_{n-k-1}$, so it suffices to show that
    \begin{equation}
        a_{n-k-1}+a_{n-k}\ >\ a_{n-1-(k-c)}+a_{n-1-2(k-c)}.
    \end{equation}
    Writing $n = 2k+i$ for $-2c+2\le i\le c+d+2$, this is equivalent to showing that
    \begin{equation}
        a_{k+i-1}+a_{k+i}\ >\ a_{k+i-1+c}+a_{i+2c-1}.
    \end{equation}
    The maximal index that will occur in all of these inequations is $k+2c+d+1$, so from Lemma~\ref{lemma:initial_values}, we can replace each $a_{k+j}$ with $k + f_T(j)$ because $k>(2c+d+1)+2c-1 = 4c+d$. The desired inequation becomes
    \begin{equation}
        (k+f_T(i-1)) + (k+f_T(i))\ >\ (k+f_T(i-1+c)) + a_{i+2c-1},
    \end{equation}
    where $a_{i+2c-1} = i+2c-1$ as $i+2c-1\le (c+d+2)+2c-1\le k-c+1$ for $k\ge 4c+d$. Hence, the result holds for
    \begin{equation}
        k\ >\ f_T(i-1+c)-f_T(i-1)-f_T(i)+i+2c-1
    \end{equation}
    which is true for our choice of $k$.
\end{proof}

\begin{lemma}\label{lemma:base_case_C}
    Let $m$ be the smallest nonnegative integer such that for all $i\ge m$,
    \begin{equation}\label{equation:quadratic_bound}
        \frac{1}{2}i^2+\left(\frac{3}{2}-c\right)i+\left(1+\frac{1}{2}c-\frac{3}{2}c^2\right)\ >\ 0,
    \end{equation}
    and let $d$ be the maximum of $2c+m-1$ and
    \begin{equation}
        \max_{-2c+2\le i\le c+m}\{f_T(i+c)-f_T(i)-f_T(i+1)+i+1\}.
    \end{equation}
    Then $C_d(n)$ is true for $c+d+1\le n\le k+c+1+d$ for all $k\ge 4c+d+1$ such that
    \begin{equation}
        k\ >\ \max_{-2c+2\le i\le c+d+2}\{f_T(i-1+c)-f_T(i-1)-f_T(i)+i+2c-1\}.
    \end{equation}
\end{lemma}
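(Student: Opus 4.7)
The plan is to reduce $C_d(n)$ to an inequation purely among the constants $f_T(\cdot)$ and then argue by cases on $i \coloneqq n - k$. Since $k \geq 4c + d + 1$, the hypotheses of Lemma~\ref{lemma:initial_values} apply to every index occurring in $C_d(n)$, giving $a_m = k + f_T(m - k)$; hence $C_d(n)$ is equivalent to
\begin{equation}
    f_T(i+1) + f_T(i) \ >\ f_T(i+c) + f_T(i - d),
\end{equation}
where $i$ ranges over $[c+d+1-k,\, c+d+1]$. We also use the fact (part of Lemma~\ref{lemma:initial_values}) that $f_T(j) = j$ for all $j \leq -c + 1$.

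For $i \leq -2c + 1$, we have $i + c \leq -c + 1$ and (using $d \geq 2c + m - 1$) also $i - d \leq -c + 1$, so every $f_T$ value collapses to its argument and the inequation reduces to $d > c - 1$, which holds because $d \geq 2c + m - 1 \geq c$. For $-2c + 2 \leq i \leq c + m$, the same bound on $d$ still forces $i - d \leq -c + 1$, so $f_T(i - d) = i - d$ and the inequation becomes
\begin{equation}
    d \ \geq \ f_T(i + c) - f_T(i) - f_T(i+1) + i + 1,
\end{equation}
which is precisely the second defining condition on $d$.

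The remaining and most delicate range is $c + m + 1 \leq i \leq c + d + 1$, where the max-formula bound on $d$ no longer applies and the quadratic hypothesis must enter. The idea is to exploit super-linear growth of $f_T$ extracted from Proposition~\ref{proposition:lower-bound-relation}: iterating $a_{k+i+1} \geq a_{k+i} + a_i$ in the regime where $a_i = i$ yields $f_T(j+1) - f_T(j) \geq j$, and summation gives a quadratic-in-$j$ lower bound on $f_T(j)$. Substituting these estimates into $f_T(i+1) + f_T(i) - f_T(i+c)$ produces a quadratic in $i$ of the form $\tfrac{1}{2} i^2 + (\tfrac{3}{2} - c) i + (1 + \tfrac{c}{2} - \tfrac{3}{2} c^2)$ (after accounting for the initial-regime correction terms), whose positivity for $i \geq m$ is the lemma's hypothesis.

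The chief obstacle is the sub-range $d - c + 2 \leq i \leq c + d + 1$, in which $i - d > -c + 1$ and $f_T(i - d)$ is no longer the identity. Here $i - d$ lies in the bounded window $[-c + 2,\, c + 1]$, so $f_T(i - d)$ takes only finitely many values, each depending on $T$ alone; since the quadratic lower bound on $f_T(i+1) + f_T(i) - f_T(i+c)$ grows like $\tfrac{1}{2} i^2$, a uniform constant upper bound on $f_T(i-d)$ is absorbed once $i \geq m + 1$, which is guaranteed by $i \geq c + m + 1$ and $c \geq 1$. Assembling the three cases delivers $C_d(n)$ on the full interval $c + d + 1 \leq n \leq k + c + 1 + d$.
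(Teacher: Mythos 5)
Your first two cases ($i\le -2c+1$, where everything collapses to the identity, and $-2c+2\le i\le c+m$, where the defining bound on $d$ applies) match the paper's argument. The gap is in the remaining range $c+m+1\le i\le c+d+1$. You propose to control $f_T(i+1)+f_T(i)-f_T(i+c)$ using only the iterated lower bound $f_T(j+1)\ge f_T(j)+j$ extracted from Proposition~\ref{proposition:lower-bound-relation}. That proposition is one-sided: it bounds every $f_T$ value from \emph{below}, but $f_T(i+c)$ enters your target inequation with a negative sign, so you need an \emph{upper} bound on it, and no such bound follows from the growth estimate alone. The paper closes this by first invoking Lemma~\ref{lemma:base_case_A_B} to obtain the exact equalities $a_{k+c+1+t}=a_{k+c+t}+a_{c+t}$ for $1\le t\le c+d$, which give the closed form $a_{k+c+1+t}=k+f_T(c+1)+\tfrac{1}{2}t^2+\left(\tfrac{1}{2}+c\right)t$; it is this exact evaluation, not a lower bound, that lets the $f_T(i+c)$ term be computed. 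This is also precisely where the hypothesis $k>\max_{-2c+2\le i\le c+d+2}\{f_T(i-1+c)-f_T(i-1)-f_T(i)+i+2c-1\}$ gets used; your sketch never touches that hypothesis, which signals that the appeal to Lemma~\ref{lemma:base_case_A_B} is missing.

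There is a second problem in the same range: you claim the $T$-dependent quantity $f_T(i-d)$ is ``absorbed'' by the quadratic growth once $i\ge m+1$. But $m$ is defined only so that the quadratic in \eqref{equation:quadratic_bound} is \emph{positive} from $m$ onward, not so that it exceeds any particular $T$-dependent constant, so nothing guarantees absorption. In the paper's computation the constants $f_T(c+1)$ produced by the closed form cancel exactly, leaving $f_T(c+1+i-d)-f_T(c+1)$ on the right-hand side, which is $\le 0$ because $c+1+i-d\le c+1$ and the $f_T(j)$ are increasing; only after that cancellation does bare positivity of the quadratic suffice. As written, your final case would require the quadratic to dominate an unbounded-over-$T$ constant, which the hypothesis on $m$ does not provide.
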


\begin{proof}
    Fix such an $m$, $d$, and $k$. Suppose first that $c+d+1\le n\le k-2c+1$. Then the maximal possible index appearing in $C_d(n)$ is $(k-2c+1)+c = k-c+1$, so by Lemma~\ref{lemma:initial_values} we can replace the sequence terms in $C_d(n)$ with their indices. The desired condition then becomes
    \begin{equation}
        (n+1) + n\ >\ (n+c) + (n-d),
    \end{equation}
    which is true because $d\ge 2c+m-1\ge c$.
    
    Next, suppose that $k-2c+2\le n \le k+c+m$. Write $n = k+i$ for $-2c+2\le i \le c+m$. Because $d\ge 2c+m-1$, we have $k+i-d \le k+(c+m)-(2c+m-1) = k-c+1$, so $a_{k+i-d} = k+i-d$. Moreover, the maximal possible index appearing in $C_d(n)$ is $k+2c+m$, so because $k\ge 4c+d+1 \ge (2c+m)+2c-1$, we can apply Lemma~\ref{lemma:initial_values} to write $C_d(n)$ as
    \begin{equation}
        (k+f_T(i+1)) + (k+f_T(i))\ >\ (k+f_T(i+c)) + (k+i-d).
    \end{equation}
    This holds because
    \begin{equation}
        d\ >\ f_T(i+c) - f_T(i) - f_T(i+1) + i.
    \end{equation}
    
    Finally, suppose that $k+c+1+m\le n\le k+c+1+d$. Note that we have taken $k$ to be sufficiently large to apply Lemma~\ref{lemma:base_case_A_B}. For all $1\le t\le c+d$, we therefore have
    \begin{equation}
        a_{k+c+1+t}\ =\ a_{k+c+t}+a_{c+t},
    \end{equation}
    where $k+c+1\le k+c+t\le k+2c+d\le 2k+c+d+2$ because $k>c$, so we are indeed in the cases proven in Lemma~\ref{lemma:base_case_A_B}. Because, $k\ge 3c = (c+1)+2c-1$, we may take $a_{k+c+1} = k+f_T(c+1)$ by Lemma~\ref{lemma:initial_values}. By the same lemma, because $2c+d\le k-c+1$, we may replace any occurrence of $a_{c+t}$ with $c+t$.  It then follows from repeated application of the above recursion that
    \begin{align}
        a_{k+c+1+t}\ 
            &=\ a_{k+c+1} + a_{c+1} + a_{c+2} + \cdots + a_{c+t} \\
            &=\ k + f_T(c+1) + (c+1) + (c+2) + \cdots + (c+t) \\
            &=\ k + f_T(c+1) + \frac{1}{2}t^2 + \left(\frac{1}{2}+c\right)t, \label{equation:quadratic}
    \end{align}
    where this clearly extends to $t=0$ as well.
    
    Write $n = k+c+1+i$ for $m\le i\le d$. To prove $C_d(k+c+1+i)$, we want to show that
    \begin{equation}
        a_{k+c+2+i} + a_{k+c+1+i}\ >\ a_{k+2c+1+i} + a_{k+c+1+i-d}.
    \end{equation}
    We can write $a_{k+c+1+i-d} = (k+f_T(c+1+i-d))$ because $k\ge 3c-1 \ge (c+1+i-d)+2c-1$. Applying Equation~\eqref{equation:quadratic} to the remaining terms, the desired inequation becomes
    \begin{equation}
        \begin{gathered}
            \left(k + f_T(c+1) + \tfrac{1}{2}(i+1)^2 + \left(\tfrac{1}{2}+c\right)(i+1)\right)+\left(k + f_T(c+1) + \tfrac{1}{2}i^2 + \left(\tfrac{1}{2}+c\right)i\right)\\
           >\ \left(k + f_T(c+1) + \tfrac{1}{2}(i+c)^2 + \left(\tfrac{1}{2}+c\right)(i+c)\right)+(k+f_T(c+1+i-d)).
        \end{gathered}
    \end{equation}
    Canceling and rearranging, we obtain
    \begin{equation}
        \frac{1}{2}i^2+\left(\frac{3}{2}-c\right)i+\left(1+\frac{1}{2}c-\frac{3}{2}c^2\right)\ >\ f_T(c+1+i-d) - f_T(c+1).
    \end{equation}
    Note that as $i\le d$, we have $c+1+i-d\le c+1$. Therefore, because the $f_T(j)$ increase as $j$ increases, it suffices to prove that
    \begin{equation}
        \frac{1}{2}i^2+\left(\frac{3}{2}-c\right)i+\left(1+\frac{1}{2}c-\frac{3}{2}c^2\right)\ >\ 0.
    \end{equation}
    which is true for $i\ge m$ by the definition of $m$.
\end{proof}

We have thus established the base cases for this family of $S$-LID sequences that can be used to apply Theorem~\ref{theorem:finite_check}.

\begin{theorem}\label{theorem:fixed_fringes}
    Let $T$ be a non-empty finite set of positive integers with largest element $c$, define $d$ as in Lemma~\ref{lemma:base_case_C}, and suppose that $k\ge \max\{2d-4c+2,4c+d+1\}$ such that 
    \begin{equation}
        k\ >\ \max_{-2c+2\le i\le c+d+2}\{f_T(i-1+c)-f_T(i-1)-f_T(i)+i+2c-1\}.
    \end{equation}
    Then for $S = [k]\setminus (k-T)$, the $S$-LID sequence $(a_n)_{n=1}^\infty$ satisfies the recurrence
    \begin{equation}
        a_{n+1}\ =\ a_{n} + a_{n-k}
    \end{equation}
    for $n>k+c$.
\end{theorem}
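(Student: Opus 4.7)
The plan is to observe that Theorem~\ref{theorem:fixed_fringes} is essentially a direct application of the finite check established in Theorem~\ref{theorem:finite_check}, assembling the pieces prepared in Lemmas~\ref{lemma:initial_values}, \ref{lemma:base_case_A_B}, and \ref{lemma:base_case_C}. The whole architecture of Section~\ref{subsection:fixed_fringes} has been set up so that Theorem~\ref{theorem:fixed_fringes} should now be immediate.

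Concretely, I would verify the three hypotheses required by Theorem~\ref{theorem:finite_check}. First, the inequation $k \ge 2d-4c+2$ is part of the standing hypothesis of Theorem~\ref{theorem:fixed_fringes}, so that bound on $k$ is at our disposal for free. Second, I need to exhibit the base cases of $B(n)$ for $k+c+1\le n\le 2k+c+d+2$. For this I would invoke Lemma~\ref{lemma:base_case_A_B} and just check that its hypotheses $k\ge 4c+d+1$ and
\[
k\ >\ \max_{-2c+2\le i\le c+d+2}\{f_T(i-1+c)-f_T(i-1)-f_T(i)+i+2c-1\}
\]
are exactly two of the standing hypotheses of Theorem~\ref{theorem:fixed_fringes}. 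Third, I need the base cases of $C_d(n)$ for $c+d+1\le n\le k+c+1+d$, which are delivered by Lemma~\ref{lemma:base_case_C} with the value of $d$ specified in that lemma, and whose hypotheses are again contained in those of the theorem.

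Having verified all three hypotheses of Theorem~\ref{theorem:finite_check}, the conclusion $a_{n+1}=a_n+a_{n-k}$ for $n>k+c$ follows at once. I would end by noting that the interaction among the various inequations is slightly delicate: the value of $d$ fed into Theorem~\ref{theorem:finite_check} must be the same $d$ produced by Lemma~\ref{lemma:base_case_C}, so that the intervals of base cases line up with the inductive range $n>2k+c+d+2$ used in Theorem~\ref{theorem:finite_check}. I would remark explicitly that our hypothesis $k\ge 2d-4c+2$ ensures Lemma~\ref{lemma:C_implies_B} can be applied inside Theorem~\ref{theorem:finite_check}, and that $k\ge 4c+d+1$ ensures that the index ranges appearing in Lemmas~\ref{lemma:base_case_A_B} and \ref{lemma:base_case_C} are valid.

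The main obstacle is therefore not in the proof of Theorem~\ref{theorem:fixed_fringes} itself but rather lies upstream, in the careful bookkeeping of Lemmas~\ref{lemma:base_case_A_B} and \ref{lemma:base_case_C}: one must confirm that the largest indices involved in the base-case inequations never exceed $k-c+1$ (so that Lemma~\ref{lemma:initial_values} applies with $a_n=n$) or, when they do, that they fall in the stable range $a_{k+i}=k+f_T(i)$ guaranteed by the same lemma. The only genuinely conceptual step is recognising that the quadratic-growth expression~\eqref{equation:quadratic} emerges from iterating the recurrence once it has been established on the interval addressed by Lemma~\ref{lemma:base_case_A_B}, and that beyond the cutoff $m$ of Lemma~\ref{lemma:base_case_C} this quadratic dominates the bounded quantity $f_T(c+1+i-d)-f_T(c+1)$; given the way $d$ is defined, nothing more is needed.
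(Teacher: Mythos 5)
Your proposal is correct and matches the paper's own proof, which likewise just verifies the three hypotheses of Theorem~\ref{theorem:finite_check} by citing Lemmas~\ref{lemma:base_case_A_B} and \ref{lemma:base_case_C} together with the assumed bound $k\ge 2d-4c+2$. The extra bookkeeping remarks you add are accurate but belong to the upstream lemmas rather than to this theorem's proof.
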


\begin{proof}
    The criteria on $d$ and $k$ guarantee that we are in a situation where Lemmas~\ref{lemma:base_case_A_B} and \ref{lemma:base_case_C} can apply. These lemmas give the base cases for $B(n)$ and $C_d(n)$ respectively in Theorem~\ref{theorem:finite_check}, and taken with the fact that $k\ge 2d-4c+2$ they allow us to apply that theorem to obtain the desired recurrence.
\end{proof}

We note that lower bound on $k$ is only a lower bound needed to prove the recurrence by this general method, and need not be the smallest $k$ for which the recurrence holds for a given $T$. We demonstrate this by the following example:

\begin{example} \label{example:T=1}
    Take $T=\{1\}$, so that $S = [k]\setminus(k-T) = [k]\setminus\{k-1\}$ as in Conjecture~\ref{conjecture:computation1}. Then $c=1$, so Equation~\eqref{equation:quadratic_bound} becomes
    \begin{equation}
        \frac{1}{2}i^2 + \frac{1}{2}i\ =\ 0
    \end{equation}
    which implies that $m = 1$. From Lemma~\ref{lemma:initial_values} we have $f_T(i) = i$ for $-k<i\le 0$. By computing all $S$-legal sums using the initial values of the sequence, we obtain the values of $f_T(i)$ for some small values of $i$:
    \begin{equation}
        f_T(1)\ =\ 2, \quad 
        f_T(2)\ =\ 3, \quad 
        f_T(3)\ =\ 5, \quad 
        f_T(4)\ =\ 8.
    \end{equation}
    We want $d$ to be the maximum of $2c+m-1 = 2$ and
    \begin{equation}
        \begin{aligned}
            & \max_{-2c+2\le i\le c+m}\{f_T(i+c)-f_T(i)-f_T(i+1)+i+1\} \\
            =\ & \max_{0\le i\le 2}\{-f_T(i)+i+1\} = \max\{1,0,0\} = 1,
        \end{aligned}
    \end{equation}
    so we set $d = 2$. Finally, to apply Theorem~\ref{theorem:fixed_fringes}, we need $k$ to be at least $2d-4c+2 = 2$, at least $4c+d+1 = 7$, and larger than
    \begin{equation}
        \begin{aligned}
            &\max_{-2c+2\le i\le c+d+2}\{f_T(i-1+c)-f_T(i-1)-f_T(i)+i+2c-1\} \\
            =&\ \max_{0\le i\le 5}\{f_T(i-1)+i-2c-1\}\\
            =&\ \max\{-4,-2,1,3,6,10\}\ =\ 10.
        \end{aligned}
    \end{equation}
    Thus, for $T=\{1\}$, we need $k \geq 11$ for Theorem~\ref{theorem:fixed_fringes} to apply.
    Therefore, Conjecture~\ref{conjecture:computation1} does not directly follows from  Theorem~\ref{theorem:fixed_fringes}.
\end{example}

\section{Future work}

There are many questions that can now be explored for $S$-LID sequences, even those that eventually follow a simple recurrence. Below are just a few such questions that we leave for future work:

\begin{question}
    Is there a general proof of the recurrence relation in Theorem~\ref{theorem:fixed_fringes} that works for the minimal value of $k$ for which the recurrence holds?
\end{question}

\begin{question}
    Are there infinitely many indices of the triangular quilt sequence for which Equation~\eqref{equation:recurrence-triangular} holds?
    If yes, does the natural density of the set of these indices exist?
    If it exists, what is it?
    What about the analogous problem for $S$-LID sequences?
\end{question}

\begin{question}
    The algorithm to generate terms in $S$-LID sequences%
    \footnote{Refer to the GitHub repository at \url{https://github.com/ZeusDM/S-LID-sequences} for the algorithm we used to generate these terms.} is very slow.
    The time it takes to compute the $n$\textsuperscript{th} term given the previous terms grows exponentially on $n$.
    For the choices of $S$ covered in Section~\ref{section:recurrence}, using the recurrence relation gives us a very fast algorithm: given the previous terms, we can compute the next term in constant time.
    For the choices of $S$ not covered in Section~\ref{section:recurrence}, can one construct an algorithm better than the exponential one?
\end{question}

\begin{question}
    The greedy algorithm always returns the Zeckendorf decomposition of a number $n$; that's not true for Fibonacci quilt decompositions. In \cite[Theorem 1.13]{LegalDecompositionsNPLRS} the proportion of integers $[1,q_n)$ for which the greedy algorithm gives a legal decomposition is shown to converge to some computable constant. Does an analogous result hold for $S$-LID sequences?
\end{question}

\begin{question} \label{questin:numb_decomps}
    There is a unique legal decomposition of a positive integer into Fibonacci numbers, while in \cite[Theorem 1.11]{LegalDecompositionsNPLRS}, it is shown that the average number of ways to legally decompose an integer in $[0,q_{n})$ into terms in the Fibonacci quilt sequence grows exponentially in $n$. For various choices of $S$, how does the average number of $S$-LID decompositions of integers in $[0,a_{n})$ change with $n$?
\end{question}

As a first step toward answering Question~\ref{questin:numb_decomps}, we have the following lemma, which is an adaptation of \cite[Lemma 3.1]{LegalDecompositionsNPLRS}.
\begin{lemma}\label{lem:numb_decomps_recurr}
    Let $S = [k]\setminus \{k-c\}$ with $2c<k$, and $(a_n)_{m=1}^\infty$ be the $S$-LID sequence. Let $d_n$ denote the number of $S$-LID decompositions that can be formed with $\{a_1,\ldots,a_n\}$. Then the $d_n$ follow the recurrence relation
    \begin{equation}
        d_n\ =\ d_{n-1} + d_{n-k+c} - d_{n-k+c-1} + d_{n-k-1}
    \end{equation}
    for $n>k+1$.
\end{lemma}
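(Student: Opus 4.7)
The plan is to prove the recurrence bijectively by partitioning the set of $S$-LID decompositions $L \subseteq [n]$ into three disjoint classes according to whether $n \in L$ and, when $n \in L$, whether $n-(k-c) \in L$. Since $S = [k] \setminus \{k-c\}$, the key structural observation is that the legal neighbors of $n$---that is, indices $\ell < n$ with $|n-\ell|\notin S$---are exactly $\ell = n-(k-c)$ together with the indices $\ell \leq n-k-1$, because the only admissible gaps involving $n$ are either exactly $k-c$ or at least $k+1$.

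The three classes should then contribute as follows. First, decompositions with $n\notin L$ are precisely the $S$-LID decompositions using $\{a_1,\ldots,a_{n-1}\}$, contributing $d_{n-1}$. Second, decompositions with $n\in L$ but $n-(k-c)\notin L$ must have every element of $L\setminus\{n\}$ in $[n-k-1]$, and conversely any $S$-LID decomposition $L'\subseteq[n-k-1]$ extends to $L = L'\cup\{n\}$ with all new gaps at least $k+1$; this class contributes $d_{n-k-1}$. Third, decompositions with both $n\in L$ and $n-(k-c)\in L$ should be in bijection, via $L \mapsto L\setminus\{n\}$, with $S$-LID decompositions of $[n-k+c]$ that contain $n-k+c$, and by inclusion--exclusion the count of the latter is $d_{n-k+c}-d_{n-k+c-1}$. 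Summing the three contributions produces exactly the claimed recurrence.

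The main technical obstacle is verifying the bijection in the third class, specifically the nontrivial direction: that adjoining $n$ to any $S$-LID decomposition $L'$ of $[n-k+c]$ containing $n-k+c$ yields a legal $L$ in $[n]$. For this, each $\ell' \in L' \setminus \{n-k+c\}$ must have legal gap $(n-k+c)-\ell' \notin S$ with respect to $n-k+c$, so either $\ell' = n-2k+2c$ or $\ell' \leq n-2k+c-1$; in either case $\ell' \leq n-2k+2c$. The hypothesis $2c < k$ then forces $n-2k+2c \leq n-k-1$, so $|n-\ell'|\geq k+1\notin S$, confirming that $L = L' \cup \{n\}$ is $S$-legal. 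Finally, the condition $n > k+1$ ensures that the indices $n-1$, $n-k+c$, $n-k+c-1$, and $n-k-1$ appearing in the recurrence are all nonnegative, with the natural convention $d_0 = 1$ for the empty decomposition.
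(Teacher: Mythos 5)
Your proposal is correct and is essentially the paper's own argument: the paper introduces the auxiliary count $c_n$ of decompositions containing $a_n$ and splits those into the two subclasses you call (ii) and (iii), using $c_{n-k+c}=d_{n-k+c}-d_{n-k+c-1}$ exactly as in your inclusion--exclusion step, and invoking $2c<k$ at the same point to justify that a decomposition of $[n-k+c]$ containing $n-k+c$ extends legally by $n$. Your three-way partition just unpacks the paper's identity $d_n-d_{n-1}=c_n=d_{n-k-1}+c_{n-k+c}$ directly.
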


\begin{proof}
    Let $c_n$ denote the number of $S$-LID decompositions that can be formed with $\{a_1,\ldots,a_n\}$ that include $a_n$. Then
    \begin{equation}\label{equation:c_n_d_n_relation1}
        c_n\ =\ d_n - d_{n-1}
    \end{equation}
    because the only decompositions that are counted by $d_n$ that are not counted by $d_{n-1}$ are those that use $a_n$. On the other hand, we have
    \begin{equation}\label{equation:c_n_d_n_relation2}
        c_n\ =\ d_{n-k-1} + c_{n-k+c}
    \end{equation}
    because the $S$-LID decompositions including $a_n$ partition into those whose next largest summand is at most $a_{n-k-1}$, which is counted by $d_{n-k-1}$, and those that include $a_{n-k+c}$, which is counted by $c_{n-k+c}$. Note that we use the fact that $2c<k$ in this second case to conclude that the next largest legal summand after $a_{n-k+c}$ is $a_{n-2k+2c}$, which has a strictly smaller index than $a_{n-k}$ and hence can be included regardless of the fact that we have already included $a_n$. Using Equation~\eqref{equation:c_n_d_n_relation1} to eliminate $c_n$ and $c_{n-k+c}$ from Equation~\eqref{equation:c_n_d_n_relation2} gives the lemma.
\end{proof}
We now outline how this lemma could be used to find the average number of $S$-LID decompositions. By Theorem~\ref{theorem:fixed_fringes}, for $k\gg 0$, we have $a_{n+1}-a_{n}-a_{n-k}=0$ for $n\gg 0$. Thus, for large $n$ we have $a_n = \sum_{i=1}^{k+1} \alpha_i\lambda_i^n$, where the $\alpha_i$ are constants and the $\lambda_i$ are the complex roots of $x^{k+1}-x^k-1$ (which are all distinct). Assuming one of the $\lambda_i$-terms dominates, say $\alpha_1\lambda_1^n$, which will be true if $|\lambda_1|>|\lambda_i|$ for all other $i$ and $\alpha_1\neq 0$, then we get the exponential approximation
\begin{equation}
    a_n\ =\ \alpha\lambda_1^n(1+o(1))
\end{equation}
for some constant $\alpha$. Similarly, from Lemma~\ref{lem:numb_decomps_recurr}, we have $d_n-d_{n-1}-d_{n-k+c}+d_{n-k+c-1}-d_{n-k-1}=0$. If the roots of $x^{k+1}-x^{k}-x^{c+1}+x^{c}-1$ are all distinct, and assuming some root $r$ dominates the behavior of the $d_n$, then we similarly obtain the exponential approximation
\begin{equation}
    d_n\ =\ \beta r^n(1+o(1))
\end{equation}
for some constant $\beta$. In a case where both exponential equations hold, note that $r\ge \lambda$ because $d_n\ge a_n$ always. When $r=\lambda$, we have that the average number of $S$-LID decompositions for integers in $[1,a_n)$ converges to a constant as $n\to\infty$, while if $r>\lambda$ this average grows like some multiple of $(r/\lambda)^n$ (this follows from an argument analogous to the proof of \cite[Theorem 1.11]{LegalDecompositionsNPLRS}). Note that all of the assumptions made can easily be checked by computer for any fixed choice of $S = [k]\setminus \{k-c\}$, although the problem of classifying all sets $S$ of this form will be left for future work.




\begin{thebibliography}{99}
    \bibitem{ZeckendorfsTheorem}
    \'{E}douard Zeckendorf,
    ``Repr\'{e}sentation des nombres naturels par une somme de nombres de {F}ibonacci ou de nombres de {L}ucas''.
    In: \emph{Bull. Soc. Roy. Sci. Li\`ege} 41 (1972), pp.~179--182.
    \textsc{issn:}~0037-9565.
    
    \bibitem{LegalDecompositionsNPLRS}
    Minerva Catral, Pari L. Ford, Pamela E. Harris, Steven J. Miller, and Dawn Nelson.
    ``Legal decomposition arising from non-positive linear recurrences''.
    In: \emph{Fibonacci Quart.} 54.4 (2016), pp.~348--365.
    \textsc{issn:}~0015-0517.
    \textsc{url:} \url{https://www.fq.math.ca/Papers1/54-4/CatFrdHarMilNel10202016.pdf}.
    \textsc{arXiv:} \url{https://arxiv.org/abs/1606.09312}.

    \bibitem{Ostrowski}
    Alexander Ostrowski.
    ``Bemerkungen zur Theorie der Diophantischen Approximationen''.
    In: \emph{Abh. Math. Sem. Univ. Hamburg} 1.1 (1922), pp.~77--98.
    \textsc{issn:}~1865-8784.
    \textsc{url:} \url{https://doi.org/10.1007/BF02940581}.

    \bibitem{Daykin}
    David E. Daykin.
    ``Representation of natural numbers as sums of generalized Fibonacci numbers. II''.
    In: \emph{Fibonacci Quart.} 7.5 (1969), pp. 494--510.
    \textsc{issn:}~0015-0517.
    \textsc{url:} \url{https://www.fq.math.ca/Scanned/7-5/daykin2.pdf}.

    \bibitem{f-decompositions}
    Philippe Demontigny, Thao Do, Archit Kulkarni, Steven J. Miller, David Moon, and Umang Varma.
    ``Generalizing Zeckendorf’s Theorem to $f$-decompositions''.
    In: \emph{J. Number Theory} 141 (2014), pp.~136--158.
    \textsc{issn:}~0022-314X.
    \textsc{url:} \url{https://doi.org/10.1016/j.jnt.2014.01.028}.
    \textsc{arXiv:} \url{https://arxiv.org/abs/1309.5599}.

    \bibitem{Alpert}
    Hannah Alpert.
    ``Differences of multiple Fibonacci numbers''.
    In: \emph{Integers} 9 (2009), A57, 745--749.
    \textsc{issn:}~1553-1732.
    \textsc{url:} \url{https://www.emis.de/journals/INTEGERS/papers/j57/j57.pdf}.

    \bibitem{Grabner}
    P. J. Grabner, R. F. Tichy, I. Nemes, and A. Peth\H{o}.
    ``Generalized Zeckendorf expansions''.
    In: \emph{Appl. Math. Lett.} 7.2 (1994), pp.~25--28.
    \textsc{issn:}~0893-9659.
    \textsc{url:} \url{https://doi.org/10.1016/0893-9659(94)90025-6}.
    
    \bibitem{Filipponi}
    P. Filipponi, P. J. Grabner, I. Nemes, A. Peth\H{o}, and R. F. Tichy.
    ``Corrigendum to: ``Generalized Zeckendorf expansions'' by Grabner, Tichy, Nemes and Peth\H{o}''.
    In: \emph{Appl. Math. Lett.} 7.6 (1994), pp.~25--26.
    \textsc{issn:}~0893-9659.
    \textsc{url:} \url{https://doi.org/10.1016/0893-9659(94)90087-6}.

    \bibitem{Timothy}
    Timothy J. Keller.
    ``Generalizations of Zeckendorf’s theorem''.
    In: \emph{Fibonacci Quart.} 10.1 (1972), pp.~95--102, 111, 112.
    \textsc{issn:}~0015-0517.
    \textsc{url:} \url{https://www.fq.math.ca/10-1.html}.

    \bibitem{Hamlin}
    Nathan Hamlin and William A. Webb.
    ``Representing positive integers as a sum of linear recurrence sequences''.
    In: \emph{Fibonacci Quart.} 50.2 (2012), pp.~99--105.
    \textsc{issn:} 0015-0517.
    \textsc{url:} \url{https://www.fq.math.ca/Papers1/50-2/HamlinWebb.pdf}.

    \bibitem{OEISA000931}
    OEIS Foundation Inc.~(2021), The On-Line Encyclopedia of Integer Sequences, \url{https://oeis.org/A000931}.

    \bibitem{OEISA164316}
    OEIS Foundation Inc.~(2021), The On-Line Encyclopedia of Integer Sequences, \url{https://oeis.org/A164316}.
\end{thebibliography}
\end{document}